\newtheorem{thm}{Theorem}[section]
\newtheorem{prop}[thm]{Proposition}
\newtheorem{lemma}[thm]{Lemma}
\newtheorem{cor}[thm]{Corollary}
\newtheorem{question}[thm]{Question}
\theoremstyle{definition}
\newtheorem{defn}[thm]{Definition}
\theoremstyle{remark}
\newtheorem{q}[thm]{Question}
\newtheorem{rmk}[thm]{Remark}
\newtheorem{example}[thm]{Example}
\newcommand{\C}{\mathbb{C}}
\newcommand{\R}{\mathbb{R}}
\newcommand{\Z}{\mathbb{Z}}
\newcommand{\E}{\mathcal{E}}
\newcommand{\PP}{\mathcal{P}}
\newcommand{\DD}{\mathcal{D}}
\newcommand{\WW}{\mathcal{W}}
\newcommand{\EE}{\mathcal{E}}
\newcommand{\lra}{\longrightarrow}
\newcommand{\lmt}{\longmapsto}
\newcommand{\tw}{\mathrm{tw}}
\DeclareMathAccent{\ring}{\mathalpha}{operators}{"17}
\newcommand{\be}{\begin{enumerate}}
\newcommand{\ee}{\end{enumerate}}
\begin{document}

\title[]{Notes on open book decompositions for Engel structures}

\author{Vincent Colin, Francisco Presas, Thomas Vogel}
\address{V. Colin, Universit\'e de Nantes, 2 rue de la Houssini\`ere, 44322 Nantes, France }
\email{vincent.colin@univ-nantes.fr}

\address{F. Presas, ICMAT, Calle Nicol{\'a}s Cabrera, no. 13-15, Campus de Cantoblanco UAM, 28049 Madrid Spain}
\email{fpresas@icmat.es}

\address{T. Vogel, Math. Institut der Ludwig-Maximilians-Universit{\"a}t, Theresienstr. 39, 80333 M{\"u}nchen, Germany}
\email{tvogel@math.lmu.de}

\date{This version: \today}

\keywords{open book decomposition, Engel structure, contact structure}

\subjclass[2000]{58A30}

\thanks{V. Colin thanks the ERC Geodycon and the ANR Quantact for their support. F. Presas is supported by the Spanish Research Projects SEV-2015-0554, MTM2016-79400-P and MTM2015-72876-EXP}

\begin{abstract}
We relate open book decompositions of a $4$-manifold $M$ with its Engel structures. Our main result is, given an open book decomposition of $M$ whose binding is a collection of $2$-tori and whose monodromy preserves a framing of a page, the construction of an Engel structure whose isotropic foliation is transverse to the interior of the pages and tangent to the binding. In particular the pages are contact manifolds and the monodromy is a contactomorphism.  As a consequence, on a parallelizable closed $4$-manifold, every open book with toric binding carries in the previous sense an Engel structure. Moreover, we show that amongst the supported Engel structures we construct, there is a class of loose Engel structures.

\end{abstract}

\maketitle
\section{Introduction}
One particularly fruitful approach to the study of contact structures is the use of open book decompositions, that was introduced by E.~Giroux \cite{giroux}. He showed that every cooriented contact structure on a closed $(2n-1)$-manifold  is supported by an open book decomposition of the underlying manifold, i.e. it can be defined by a $1$-form $\alpha$ such that $\alpha$ gives a contact structure on the binding and $\alpha$ turns the pages into Weinstein domains. If $n=2$, then there is a one-to-one correspondence between open book decompositions of $M$  up to positive stabilization and isotopy classes of contact structures on $M$. 

The simpler part of this correspondence, the construction of a contact structure starting from an open book decomposition, was established by W.~Thurston and H.~Winkelnkemper \cite{tw}. 
 
Engel structures form a class of plane fields on $4$-manifolds which have attracted some interest recently, although our understanding of their properties is relatively underdeveloped.  We refer to \cite{montgomery,thomas} for explanations concerning the motivation of their study. Here we note that Engel structures  are closely related to contact structures in dimension $3$. This fact was applied successfully in \cite{thomas} to prove that every parallelizable $4$-manifold admits an Engel structure. 

The purpose of this note is to prove an analogue of the Thurston-Winkeln\-kemper theorem for Engel structures: Under the natural  assumptions that the binding is a collection of $2$-tori and that the monodromy preserves a framing of a page, we will obtain Engel structures starting from an open book decomposition of the manifold that make the pages contact, see Theorem~\ref{t:existence}. This follows from Corollary~\ref{cor: contact} for every open book decomposition with toric binding of a parallelizable $4$-manifold.

We also address stabilization constructions and the uniqueness question. In Theorem \ref{t:loose unique} we show that amongst the supported Engel structures we construct there is a special class of loose ones which is invariant under stabilization. We also raise the problem of studying Engel structures supported by an open book with tight contact pages, a property that is invariant under stabilization by Theorem~\ref{tight}.

Whether an Engel structure is homotopic --~through Engel structures~-- to an Engel structure which is adapted to an open book decomposition is unknown. Notice here that it is also unknown whether non-loose Engel structures exist. Finding a supporting open book decomposition for a loose Engel structure up to Engel homotopy amounts to, thanks to  our construction, finding an open book decomposition and a framing $(e_1,e_2,e_3,e_4)$ such that the pages are transverse to  a vector field $e_1'$ homotopic to $e_1$ and the binding tangent to $e_1'$ and linearly foliated by $e_1'$.

Also, our definition of an Engel structure being adapted to an open book decomposition is not tightly suited  since the requirements in the definition only use the associated even contact structure.

\section{Engel and even contact structures}
We assume that the reader is familiar with the basics of contact topology in dimension $3$, a lot of the material we use is covered in \cite{geiges-book}. We now give definitions of Engel structures and associated distributions. 

\begin{defn} Let $M$ be a  $2n$-dimensional manifold. A (co--oriented) even contact structure on $M$ is a hyperplane field $\E$ defined as the kernel of a $1$-form $\alpha$ so that $\alpha \wedge (d\alpha )^{n-1}$ never vanishes.
\end{defn}
To each even contact structure one can associate a line field $\WW \subset \E$, called its {\it isotropic foliation} or {\it kernel foliation},
which is the kernel of $d\alpha$ restricted to $\E$. 
If $n=2$ then the even contact condition is equivalent to $[\E,\E]  = TM$.

\begin{defn}
An Engel structure $\DD$ is a smooth plane field on a $4$-manifold $M$ such  
that  $[\DD,\DD]  =\E$ is an even contact structure.
\end{defn}

Notice that if an even contact structure $\E$ is induced by an Engel structure, i.e. $[\DD,\DD]=\E$, then $\WW$ is tangent to $\DD$. Thus, an Engel structure $\DD$ induces a flag of distributions 
$$
\WW\subset\DD\subset\E\subset TM.
$$
Moreover, if $\E$ comes from an Engel structure, then it is canonically oriented, and an orientation of $TM$ induces an orientation of $\WW$ and vice versa. If we assume that both $\DD$ and $M$ are oriented, then this shows that $\DD$ induces a framing of $TM$ which is well-defined up to homotopy. In particular, two Engel structures cannot be homotopic through Engel structures if the associated framings are not homotopic. Quite recently it was shown in \cite{cppp} that every  framing of a parallelizable $4$-manifold is homotopic to the framing induced by an Engel structure on $M$.

\begin{example}
On $\R^4$ the $1$-form $dz-ydx$ defines an even contact structure. The isotropic foliation is spanned by $\partial_w$ (where $w$ is the fourth coordinate). Locally, every even contact structure is diffeomorphic to the one given here.
\end{example}

The fact that there is a line field associated to an even contact structure/Engel structure means that Gray's stability theorem cannot hold for these distributions: A varying family of even contact structures or Engel structures will induce a varying family of isotropic foliations. Hence, the even contact structures/Engel structures in the family are not diffeomorphic to each other since dynamical properties of the isotropic foliation can change. This is explored in \cite{montgomery}.

If $\EE$ is an even contact structure on $M$ and if $Y$ is a hypersurface in $M$ transverse to $\WW$, then the plane field $$\xi =TY \cap \E$$ is a contact structure on $Y$.

Recall also that even contact structures satisfy the h-principle: A formal (oriented) even contact structure on $M$ is a pair $(\EE,\WW)$ where $\EE$ is an oriented hyperplane field on $M$ and $\WW\subset \EE$ is an oriented line field. According to \cite{mcduff}, the space of even contact structures is homotopy equivalent to the space of formal even contact structures. 

\begin{rmk}
Once  the even contact structure is fixed, the homotopy class of $\DD$ is also fixed as a plane field in $\E$. 
Indeed, let $W$ be a vector field generating $\WW$ 
and $N$ be a vector field orthogonal to $\DD$ in $\E$ (assuming we have fixed a metric).
The path of vector fields $\cos s W +\sin s N$, $s\in [0,\pi /2]$ interpolates between $W$ and $N$, 
and thus gives a path between their normal plane fields in $\E$.

However, for a fixed pair $\WW \subset \E$, the homotopy class of $\DD$ as a distribution sitting on the sequence $\WW \subset \DD \subset \E$ is not fixed and it corresponds to the homotopy classes of sections of the $S^1$-bundle $S(\E /\WW) \to M$.
\end{rmk}

The main motivating example of an even contact structure is the preimage of the contact structure under the projection $M=Y\times[0,1]\lra Y$ for a contact structure $\xi$ on $Y$. The fibers of the projection are the leaves of the isotropic foliation. The following lemma is a slight modification of this example.

\begin{lemma}  \label{l: e product}
Let $(Y,\xi)$ be a contact manifold and $\varphi$ the coordinate on $[0,1]$. Then the  hyperplane field $\EE =\xi \oplus \R (\partial_\varphi +L)$ on $Y\times[0,1]$ is an even contact structure whose isotropic foliation is spanned by $\partial_\varphi+L$ if and only if $L$ is a contact vector field on $(Y,\xi)$. 
\end{lemma}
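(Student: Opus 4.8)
The plan is to choose a contact form $\alpha$ for $\xi$ on $Y$ and write down the explicit $1$-form on $Y\times[0,1]$ whose kernel is $\EE$, then compute the even contact condition and identify the isotropic foliation directly in terms of the Lie derivative of $\alpha$ along $L$. Concretely, let $\alpha$ be a contact form with $\ker\alpha=\xi$, and define the function $h=\alpha(L)$ on $Y$ (pulled back to $Y\times[0,1]$); a natural candidate for the defining $1$-form of $\EE=\xi\oplus\R(\partial_\varphi+L)$ is $\beta=\alpha-h\,d\varphi$, since $\beta$ visibly annihilates $\xi$ (as $\xi=\ker\alpha$ and $\xi$ has no $d\varphi$ component) and annihilates $\partial_\varphi+L$ because $\beta(\partial_\varphi+L)=\alpha(L)-h=0$. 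One should double-check that $\beta$ indeed cuts out exactly the given hyperplane field, i.e. that $\xi$ and $\partial_\varphi+L$ together span a hyperplane, which holds as long as $\partial_\varphi+L\notin\xi$, which is automatic since the $d\varphi$-component is nonzero.

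Next I would compute $\beta\wedge d\beta$ on the $4$-manifold $Y\times[0,1]$. We have $d\beta=d\alpha-dh\wedge d\varphi$, where $d\alpha$ and $dh$ are pulled back from $Y$. Then $\beta\wedge d\beta=(\alpha-h\,d\varphi)\wedge(d\alpha-dh\wedge d\varphi)=\alpha\wedge d\alpha-\alpha\wedge dh\wedge d\varphi-h\,d\varphi\wedge d\alpha$. Since $\alpha\wedge d\alpha$ is a $3$-form pulled back from the $3$-manifold $Y$, it has no $d\varphi$ factor and hence wedges to zero in the top dimension once we are looking at the full $4$-form — actually the relevant point is that $\beta\wedge d\beta$ is already a $3$-form, and we need $\beta\wedge d\beta\wedge(\text{nothing})$; wait, for $n=2$ the even contact condition is $\beta\wedge d\beta\neq 0$ as a $3$-form? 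No — on a $4$-manifold $\beta\wedge d\beta$ is a $3$-form and the condition $\alpha\wedge(d\alpha)^{n-1}$ with $n=2$ reads $\beta\wedge d\beta\neq 0$, a $3$-form on a $4$-manifold, which is the correct nonvanishing condition. So I collect the terms: the purely $Y$-directed piece $\alpha\wedge d\alpha$ contributes, and the cross terms $-\alpha\wedge dh\wedge d\varphi$ and $-h\,d\varphi\wedge d\alpha$ contribute. The key computation is to recognize $\mathcal{L}_L\alpha=\iota_L d\alpha+d(\iota_L\alpha)=\iota_L d\alpha+dh$, and that $L$ is a contact vector field for $\xi$ precisely when $\mathcal{L}_L\alpha=\mu\alpha$ for some function $\mu$ on $Y$, equivalently $\iota_L d\alpha+dh=\mu\alpha$, i.e. $\iota_L d\alpha\equiv -dh\pmod{\alpha}$.

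The heart of the argument is therefore to show: (i) $\beta\wedge d\beta$ never vanishes, and (ii) $\iota_{\partial_\varphi+L}d\beta\equiv 0\pmod\beta$ — this last being exactly the statement that $\partial_\varphi+L$ spans $\ker(d\beta|_\EE)$, i.e. the isotropic foliation — both \emph{if and only if} $L$ is contact. For (ii) one computes $\iota_{\partial_\varphi+L}d\beta=\iota_{\partial_\varphi}(d\alpha-dh\wedge d\varphi)+\iota_L(d\alpha-dh\wedge d\varphi)=dh+\iota_L d\alpha-(\iota_L dh)\,d\varphi=(\mathcal{L}_L\alpha)-(L\cdot h)\,d\varphi$, and one sees this is a multiple of $\beta=\alpha-h\,d\varphi$ exactly when $\mathcal{L}_L\alpha=\mu\alpha$ (the $d\varphi$ coefficients matching up forces the multiple to be $\mu$ and then $L\cdot h=\mu h$, which is automatic from applying $\mathcal{L}_L\alpha=\mu\alpha$ to $L$). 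For the converse, if $L$ is \emph{not} contact, then $\mathcal{L}_L\alpha$ fails to be proportional to $\alpha$ at some point, and one checks that there $\iota_{\partial_\varphi+L}d\beta$ is not proportional to $\beta$, so $\partial_\varphi+L$ does not span the kernel foliation; one should also verify that in this case either the even contact condition still holds (with a \emph{different} isotropic foliation) or fails, but in any event the biconditional in the lemma as stated — ``even contact \emph{whose isotropic foliation is spanned by} $\partial_\varphi+L$'' — fails. The main obstacle I anticipate is bookkeeping the ``mod $\alpha$'' / ``mod $\beta$'' conditions cleanly and handling the converse direction carefully: one must make sure that when $L$ is not contact, the failure is genuinely of the combined ``even contact with that specific kernel foliation'' assertion, and phrase the two implications so that the degenerate sub-case (where $\beta\wedge d\beta$ vanishes somewhere) is absorbed correctly rather than producing a spurious counterexample to the ``only if'' direction.
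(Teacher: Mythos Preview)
The paper states this lemma without proof, so there is nothing to compare against; your direct computation via the defining form $\beta=\alpha-h\,d\varphi$ with $h=\alpha(L)$ is the natural argument and is correct. One point worth making explicit in your write-up: the even contact condition $\beta\wedge d\beta\neq 0$ actually holds for \emph{every} vector field $L$ on $Y$, because the component of $\beta\wedge d\beta$ with no $d\varphi$-factor is exactly $\alpha\wedge d\alpha$, the contact volume on $Y$, which never vanishes. Hence $\EE$ is always an even contact structure, and the biconditional is entirely carried by the isotropic-foliation clause; your computation $\iota_{\partial_\varphi+L}d\beta=\mathcal{L}_L\alpha-(L\cdot h)\,d\varphi$ then gives both directions cleanly, since this is a multiple of $\beta=\alpha-h\,d\varphi$ precisely when $\mathcal{L}_L\alpha=\mu\alpha$ (and then $L\cdot h=\mu h$ follows automatically by evaluating on $L$). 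Once you note this, the hedging in your last paragraph about ``degenerate sub-cases'' becomes unnecessary.
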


In order to find an Engel structure $\DD$ such that the even contact structure $\EE$ using Lemma~\ref{l: e product} satisfies $\EE=[\DD,\DD]$ we require in addition, that $\xi$ admits a trivialization $C_1,C_2$.

\begin{lemma}
Let $L$ be a contact vector field on $(Y,\xi)$. For a sufficiently big positive integer $k$ the plane field spanned by
$$
W = \partial_\varphi+L \textrm{ and } X_k = \cos(k\varphi)C_1+\sin(k\varphi) C_2
$$
is an Engel structure $\DD_k$ such that $[\DD_k,\DD_k]=\EE$. In particular, the isotropic foliation is spanned by $W$.   
\end{lemma}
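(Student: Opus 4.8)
The plan is to reduce the whole statement to one elementary Lie bracket computation. Since $L$ is a contact vector field, Lemma~\ref{l: e product} already tells us that $\EE=\xi\oplus\R W$ with $W=\partial_\varphi+L$ is an even contact structure on $Y\times[0,1]$ whose isotropic foliation is $\R W$; in particular $[\EE,\EE]=TM$. Hence to prove that $\DD_k=\mathrm{span}(W,X_k)$ is an Engel structure with $[\DD_k,\DD_k]=\EE$ it suffices to show that $W$, $X_k$ and $[W,X_k]$ span exactly $\EE$ for $k$ large; the Engel property and the identification of the isotropic foliation with $\R W$ then follow at once.

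First I would compute $[W,X_k]$. Using that $C_1,C_2$ and $L$ do not depend on $\varphi$ and that $[\partial_\varphi,C_i]=0$, one gets
$$
[W,X_k]=k\bigl(-\sin(k\varphi)C_1+\cos(k\varphi)C_2\bigr)+\cos(k\varphi)[L,C_1]+\sin(k\varphi)[L,C_2].
$$
Write $Y_k=-\sin(k\varphi)C_1+\cos(k\varphi)C_2$ and $Z_k=\cos(k\varphi)[L,C_1]+\sin(k\varphi)[L,C_2]$, so that $[W,X_k]=kY_k+Z_k$. This is where the contact hypothesis on $L$ is used: because the flow of a contact vector field preserves $\xi$, the operator $\mathcal{L}_L$ sends sections of $\xi$ to sections of $\xi$, so $[L,C_1]$, $[L,C_2]$, and hence $Z_k$, are sections of $\xi$. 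Moreover $(X_k,Y_k)$ is obtained from the pointwise frame $(C_1,C_2)$ of $\xi$ by a rotation, so $X_k,Y_k$ frame $\xi$ at every point, and therefore $W,X_k,Y_k$ frame $\EE$. Consequently $[W,X_k]\in\Gamma(\EE)$ and $[\DD_k,\DD_k]\subseteq\EE$ for every $k$.

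It remains to check that $[W,X_k]\notin\mathrm{span}(W,X_k)$ once $k$ is large, so that $[\DD_k,\DD_k]$ has rank $3$ and equals $\EE$. Expanding $Z_k=aX_k+bY_k$ in the frame $(X_k,Y_k)$ of $\xi$ gives $[W,X_k]=aX_k+(k+b)Y_k$, which is independent of $W$ and $X_k$ precisely when $k+b\neq0$. The function $b$ is built from $C_i$ and $[L,C_i]$ only and is in particular bounded, independently of $k$, by some constant $B$ (here one uses compactness of $Y$ — as holds when $Y$ is a page — or whatever ambient control is available, to make this bound uniform over the manifold). Thus any integer $k>B$ works simultaneously at all points, and for such $k$ we conclude $[\DD_k,\DD_k]=\EE$, so $\DD_k$ is an Engel structure whose isotropic foliation is that of $\EE$, namely $\R W$.

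The only genuinely delicate point is the phrase ``sufficiently big $k$'': it encodes the need for the bound on $b$ to be uniform over the manifold, which is the reason one wants $Y$ compact. Everything else is the routine bracket calculation above, the conceptual input being that $L$ being a contact vector field is exactly what forces the lower-order term $Z_k$ to remain inside $\xi\subset\EE$.
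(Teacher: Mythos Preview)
Your argument is correct and is the natural direct verification; the paper states this lemma without proof, so there is nothing to compare against. Your observation that the phrase ``sufficiently big $k$'' implicitly requires a uniform bound on $b$ (hence compactness of $Y$, which holds in all the applications) is a fair point that the paper leaves implicit.
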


\section{Open book decomposition of a $4$-manifold}

\subsection{Open books with torus binding}

In this section we summarize basic definitions concerning open book decompositions. For background on open book decompositions we refer to \cite{etnyre} when the underlying manifold has dimension $3$, to \cite{giroux, colin} for the odd-dimensional case and to \cite{quinn} for the general case. 
\begin{defn}
Let $M$ be a closed manifold. An {\em open book decomposition} of $M$ is a pair $(K,\theta)$ where
\begin{itemize}
\item $K\subset M$ is a non-empty, not necessarily connected, submanifold of codimension two with trivial normal bundle,
\item $\theta :M\setminus K \rightarrow S^1$ is a fibration, and 
\item one can choose coordinates on a neighborhood
\begin{align*}
N(K) & \simeq K\times D^2 \\
& =\{ (x,(r, \phi ))\,|\,x\in K \textrm{ and } (r,\phi)\in D^2 \textrm{ are polar coordinates}\}
\end{align*}
of $K$ such that $\theta(x,(r,\phi))=\phi$ for $r\neq 0$.
\end{itemize}
\end{defn}

The submanifold $K$ is called the {\em binding} of the open book decomposition. 
For $t \in S^1$, the preimage $\theta^{-1} (t)$ will be called a {\it fiber}
of $(K,\theta )$. The compactification of any fiber obtained by addition of $K$ is called a {\it page}. The natural orientation of $S^1$ lifts through $\theta$ to a natural coorientation of the pages. If the ambient manifold $M$ is oriented, then we obtain a natural orientation on 
the pages, and $K$ is naturally oriented as the boundary of a page.

The simplest example of a manifold with an open book decomposition is the $2$-sphere. In this case, the binding consists of a pair of points, the complementary annulus fibers over the circle and the fibers are intervals.  

When $M$ is an oriented $3$-manifold, then $M$ admits an open book decomposition according to a theorem of Alexander \cite{alexander}.
\begin{defn}(Giroux \cite{giroux})
Let $(M,\xi)$ be a contact $3$-manifold and $(K,\theta)$ an open book decomposition of $M$. Then $(K,\theta)$ {\em supports} $\xi$ if there is a defining contact form $\alpha$ for $\xi$ such that 
\begin{itemize}
\item[(i)] $\alpha|_K$ is positive, and
\item[(ii)] the restriction of $d\alpha$ to fibers of $(K,\theta)$ is a positive area form. 
\end{itemize}
\end{defn}
According to \cite{giroux}, in any odd dimension, every contact structure has a supporting open book decomposition. See \cite{presas} for a complete written account of Giroux' proof.

When $M$ has dimension four, $K$ is a disjoint union of embedded closed surfaces. By the Poincar{\'e}-Hopf-index theorem the Euler characteristic of $K$ equals the Euler characteristic of $M$. For $4$-manifolds we will require in addition that $K$ is a union of tori.  Then $\chi(M)=0$ , and this is a condition Engel manifolds satisfy. 

Moreover, it follows from Novikov's theorem that the signature of a $4$-manifold admitting an open book decomposition vanishes. However, $4$-manifolds which admit an open book decomposition do not always admit a Spin-structure. Consider for example the non-trivial $S^2$-bundle over $S^2$.

It seems to be unknown  whether or not all oriented $4$-manifolds with vanishing signature admit an open book decomposition \cite{quinn}. Below we describe simple examples of open book decompositions on $4$-manifolds which fiber over $3$-manifolds. 

So far we have considered open book decomposition from an intrinsic point of view. From a more extrinsic point of view, an open book decomposition of an $n$-manifold $M$ is a triple $(Y,h,\phi)$ where 
\begin{itemize}
\item $Y$ is a compact oriented $(n-1)$-manifold with non-empty boundary,
\item $h:Y\rightarrow Y$ is a diffeomorphism which is the identity on a neighborhood of $\partial Y$, and
\item $\phi : \Sigma_0 (Y,h) \rightarrow M$ is a homeomorphism where $\Sigma_0 (Y,h)$ is the relative mapping torus
of $(Y,h)$. 
\end{itemize}
This determines the manifold only up to homeomorphism. Recall that the relative mapping torus of $(Y,h)$ is
$$
\Sigma_0 (Y,h) = Y\times [0,1] /\sim_h ,
$$
where $\sim_h$ is the equivalence relation 
\begin{align*}
(h(x),0)\sim_h (x,1)&\textrm{ for all }x\in Y\textrm {, and} \\ 
(x,t)\sim_h (x,t')&\textrm{ for all } x\in \partial Y, t,t' \in [0,1].
\end{align*}

The triple  $(Y,h,\phi )$ gives rise to an open book decomposition of $M$ whose binding is $\phi (\partial Y \times [0,1] /\sim_h )$ and whose pages are $Y_t =\phi (Y \times \{ t\} )$. Conversely, any open book decomposition can be seen as the relative mapping torus of some $(Y,h)$ (notice that we usually do not mention the identification $\phi$).

\begin{example}\label{ex: product}
Let $(K_0 ,\theta_0 )$ be an open book decomposition of a $3$-manifold $Y$, also given by the relative mapping torus of $(S_0 ,h_0 )$. Here $S_0$ is a compact oriented surface with non-empty boundary.
Then $(S^1 \times K_0 ,id\times \theta_0 )$ is an open book decomposition of $S^1 \times Y$, also described by the relative mapping torus of $(S^1 \times S_0 ,id\times h_0 )$.
\end{example}

We close this section with an example of an  open book decomposition of $S^3$. This example will appear in the discussion about stabilizations of open book decompositions. 

\begin{example} \label{ex:S3 open book}
On $S^3\subset\mathbb{C}^2$ we consider the contact form $\alpha=r_1^2d\varphi_1+r_2^2d\varphi_2$. Here $r_1,\varphi_1$, respectively $r_2,\varphi_2$, are polar coordinates on the first, respectively second factor of $\C^2$.

Then the Hopf link $K_0=\{r_1=0\}\cup\{r_2=0\}$ is the binding of an open book decomposition which supports $\mathrm{ker}(\alpha)$. The complement of the Hopf link $K_0$ fibers over  $S^1$ as follows
\begin{align*}
\theta_0 : S^3\setminus K &\longrightarrow S^1 \\
((r_1,\varphi_1),(r_2,\varphi_2)) & \longmapsto \varphi_1+\varphi_2.
\end{align*}
One can check that $(K_0,\theta_0)$ is an open book decomposition of $S^3$ and that $(K_0,\theta_0)$ supports $\xi= \mathrm{ker}(\alpha)$. The page $A_0=\theta^{-1}_0(0)$ of the open book is the annulus 
$$
\{ (r,\varphi,\sqrt{1-r^2},-\varphi)\in S^3 \,|\, 0<r<1 \textrm{ and }\varphi\in[0,2\pi]\}
$$ 
and the monodromy of this open book decomposition is a positive Dehn twist along the circle $\{r=1/2\}\subset\theta^{-1}_0(0)$.
\end{example}

\subsection{Stabilization} \label{stabil}
As we have explained in the introduction, isotopy classes of contact structures on $3$-manifolds  are in one-to-one correspondence with open book decompositions up to positive stabilization. In this section we suggest a definition of stabilization for open book decompositions of  $4$-manifolds. 

Since we propose to study the relationship between Engel structures and open book decompositions, the property that the binding is a union of tori should be preserved. 

Let $M$ be equipped with an open book decomposition $(K,\theta)$ obtained as the relative mapping torus of $(Y,h)$. We pick a properly embedded annulus $(A,\partial A )\subset (Y,\partial Y)$ such that no   component of $\partial A$ is contractible in $\partial Y$.

We then consider the $3$-manifold $Y'$ obtained by gluing $S^1 \times [0,1] \times [-1,1]$ to $Y$, where the gluing map takes $S^1 \times \{ 0,1\} \times [-1, 1]$ to $\partial Y$ so that the two curves $S^1 \times \{ 0,1\} \times \{ 0\}$ are identified with $\partial A$ and $T=A\cup (S^1 \times [ 0,1] \times \{ 0\})$ is a torus. The boundary of $Y'$ is still a union of tori.

Let $\gamma\subset T$ be an embedded closed curve intersecting the circle $S^1 \times \{ \frac{1}{2} \} \times \{ 0\}$ exactly once. We then take a small tubular neighborhood $N(T)$ of $T$ in $Y'$ and consider the fibered Dehn twist along $T$ in the direction of $\gamma$, supported in $N(T)$. We call it $\tau_{\gamma}$.
and consider the  diffeomorphism $h'=\tau_{\gamma} \circ h$ of $Y'$.

\begin{prop} 
The relative mapping torus of $(Y',h')$ is diffeomorphic to $M$ by a diffeomorphism which induces an injection from a page $Y$ of $\Sigma_0 (Y,h)$ to a page $Y'$ of $\Sigma_0 (Y',h')$.
\end{prop}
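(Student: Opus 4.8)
The plan is to regard the stabilization as attaching a round $1$-handle to the page together with the monodromy twist dual to that handle, and to show that in the relative mapping torus this produces a cancelling handle pair, so that the total space is unchanged; this is the exact analogue of the classical fact that a Giroux positive stabilization (Murasugi sum with a Hopf band) does not change the ambient $3$-manifold. First I would localise the modification. Write $Y'=Y\cup R$, where $R=S^{1}\times[0,1]\times[-1,1]$ (a round $1$-handle for the $3$-manifold $Y$) is attached along its two end-annuli $S^{1}\times\{0,1\}\times[-1,1]$ to a collar of $\partial Y$, and $h'=\tau_{\gamma}\circ\widetilde h$, where $\widetilde h$ extends $h$ by the identity over $R$ (possible since $h$ is the identity near $\partial Y$). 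Since $\tau_{\gamma}$ is supported in the tubular neighbourhood $N(T)\cong T^{2}\times[-1,1]$ of $T=A\cup\bigl(S^{1}\times[0,1]\times\{0\}\bigr)$, the maps $h'$ and $\widetilde h$ agree outside $N(T)$ and are the identity near $\partial Y'$. Choosing the attaching annuli of $R$ to cover exactly the part of $\partial Y$ that borders a fixed neighbourhood $N(A)\subset Y$ of $A$, one gets that $\Sigma_{0}(Y',h')$ and $M=\Sigma_{0}(Y,h)$ contain a common region $\mathcal C$, the image of $Y_{0}\times[0,1]$ with $Y_{0}=\overline{Y\setminus N(A)}$, on which the two relative mapping tori are glued identically. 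Thus $M=\mathcal C\cup\mathcal V$ and $\Sigma_{0}(Y',h')=\mathcal C\cup\mathcal V'$ with $\partial\mathcal V=\partial\mathcal V'$, and it suffices to produce a diffeomorphism $\mathcal V'\to\mathcal V$ that is the identity on this common boundary.

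This is the model computation, and it is where the choice of $\gamma$ --- a curve on $T$ running over the round handle exactly once, so that it meets the core circle $S^{1}\times\{\tfrac12\}\times\{0\}$ exactly once --- and the support and profile of $\tau_{\gamma}$ are used. By construction $\mathcal V$ and $\mathcal V'$ are the closures of the complements of $\mathcal C$ in $M$ and in $\Sigma_{0}(Y',h')$; $\mathcal V'$ is obtained from $\mathcal V$ by attaching the suspended round handle carried by $R$ and then replacing the gluing $\widetilde h$ by $\tau_{\gamma}\circ\widetilde h$. Working in the product coordinates $T^{2}\times[-1,1]$ on $N(T)$ (with an extra suspension parameter in $[0,1]$), the heart of the argument is an explicit isotopy showing that the fibered Dehn twist $\tau_{\gamma}$ can be untwisted at the precise cost of absorbing $R$ --- equivalently, that the round handle and the twist form a cancelling pair in the $4$-manifold --- which gives the desired diffeomorphism $\mathcal V'\to\mathcal V$, fixed near the boundary. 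Gluing it to the identity on $\mathcal C$ gives a diffeomorphism $\Sigma_{0}(Y',h')\to M$; since on $\mathcal C$ the identification is the obvious one, it is compatible with the inclusion of pages $Y\hookrightarrow Y'$, and so it carries a page of $M$ onto the sub-page $Y$ of a page $Y'$ of $\Sigma_{0}(Y',h')$, which is the claimed injection.

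The main obstacle is precisely the bookkeeping in this model computation: pinning down the parametrisation of the attaching annuli of $R$, the curve $\gamma$, and the shape and support of $\tau_{\gamma}$, and then checking that the twist compensates exactly for the change of framing caused by the round handle, so that $\mathcal V$ and $\mathcal V'$ become diffeomorphic rel boundary. The remaining points --- that the modification is confined to $\mathcal V'$, that $\mathcal C$ is literally shared (which forces the compatible choices of $N(A)$ and of the attaching annuli near $\partial A$), and the compatibility with pages --- are routine.
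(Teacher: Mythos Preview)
Your strategy is sound in outline, but it differs from the paper's argument and leaves unresolved precisely the step you yourself flag as the obstacle.

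The paper does not perform a direct handle-cancellation computation. Instead it observes that the $4$-dimensional stabilization is, after a suitable choice of coordinates on $A$ making $\gamma=\{0\}\times[0,1]$, the $S^1$-product of the classical $3$-dimensional Giroux stabilization. Concretely: take the open book $(K_0,\theta_0)$ on $S^3$ with annular page $A_0$ and positive Dehn twist monodromy (the Hopf link open book), and cross with $S^1$ to get the open book $(S^1\times K_0,\mathrm{id}\times\theta_0)$ on $S^1\times S^3$. For a cocore arc $a\subset A_0$ one has $S^1\times S^3\setminus\mathrm{int}(S^1\times N(a))\cong S^1\times B^3$. On the other side, a neighbourhood $N(A)$ of $A$ in $M$ is an $S^1$-invariant $S^1\times B^3$ carrying a partial open book conjugate to $S^1\times(N(a),K_0,\theta_0)$. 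One then performs the $S^1$-invariant ``connected sum'': remove $N(A)\cong S^1\times N(a)$ from $M$ and glue in $S^1\times(S^3\setminus\mathrm{int}\,N(a))$. The result carries the open book $(Y',h')$ by construction, and it is diffeomorphic to $M$ because one has removed a copy of $S^1\times B^3$ and glued back another; the paper also notes that Laudenbach--Po\'enaru (every diffeomorphism of $S^1\times S^2$ extends over $S^1\times D^3$) disposes of any residual gluing ambiguity. The compatibility with pages is immediate from the $S^1$-invariance.

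What this buys over your approach is that the ``model computation'' you identify as the main obstacle simply disappears: the cancellation of the round handle against the fibered Dehn twist is packaged into the known $3$-dimensional fact that a positive Hopf stabilization of an open book on a $3$-manifold $Z$ yields $Z\# S^3\cong Z$, and everything is made $S^1$-equivariant. Your approach would work if the model computation were carried out, but as written it is a plan rather than a proof, and the paper's route shows that this computation can be avoided entirely.
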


\begin{proof} Recall that an elementary positive stabilization on a $3$-dimensional open book is obtained by doing the connected sum with $S^3$ on a suitable  neighborhood $N(a)$ of a properly embedded arc $a$ \cite{giroux, giroux-goodman}. 

We start from the open book decomposition $(K_0 ,\theta_0 )$ of $S^3$ given by the relative mapping torus of a positive Dehn twist on an annulus as described in Example~\ref{ex:S3 open book}. Its binding is a positive Hopf link. Let $A_0$ be a page of this open book and
$a$ be a properly embedded arc on $A_0$ going from one boundary component to the other.

Next, we consider the manifold $S^1 \times S^3$ together with the open book 
$(S^1 \times K_0,\mathrm{id}\times\theta_0 )$. It contains the annulus $S^1 \times a$ and the associated neighborhood $S^1 \times N(a)$ where $N(a)$ is a neighbourhood of $a$  as in \cite{giroux}. The complement of $S^1 \times N(a)$  in $S^1 \times S^3$ is diffeomorphic to $S^1 \times B^3$.

Now, we take coordinates $S^1 \times [0,1]$ on $A \subset Y$ so that 
$\gamma = \{ 0\} \times [0,1] \subset A$. As in \cite{giroux}, we can find a neighbourhood $N(A)$ of $A$ in $M$ with coordinates $S^1 \times B^3$, so that the pages of $(K,\theta )$ define a ``partial'' open book decomposition on  $S^1 \times B^3$. This open book is conjugate to $S^1 \times (N(a), K_0 ,\theta_0 )$.

Since both restrictions $(S^1 \times B^3 ,K, \theta )$ and $S^1 \times (N(a), K_0 ,\theta_0 )$ are $S^1$-invariant, we can form an $S^1$-invariant connected sum of open books, i.e.
glue $(S^1 \times S^3 \setminus int (S^1 \times N(a)), S^1 \times K_0 ,id\times \theta_0 )$ to $(M\setminus int N(A) ,K,\theta ))$ in an $S^1$-invariant way to get a manifold obviously diffeomorphic to $M$, with an open book of page $Y'$ and monodromy $h'$.
\end{proof}
Another way to argue in the previous proof would be as follows: The stabilized manifold is obtained from the original manifold by removing a copy of $S^1\times D^3$ and gluing it back in. According to \cite{laudenbach-poenaru} every diffeomorphism of $S^1\times \partial D^3$ extends to $S^1\times D^3$. Hence, the diffeomorphism type of the manifold does not change when this surgery operation is applied. 

\section{Open book decompositions and even contact structures} \label{s:even open}

Let $M$ be an oriented closed $4$-manifold together with an open book decomposition $(K,\theta )$.
\begin{defn} \label{d:adapted even}
An even contact structure $\EE$ with kernel $\WW$ is {\em adapted to} (or {\em supported by}) $(K,\theta)$, if
\begin{itemize}
\item $K$ is a union of tori.
\item $\WW$ is transverse to the fibers of $\theta$. In particular, the interior of the pages $Y_\theta$ 
are naturally contact manifolds for the
contact plane $\xi_\theta =\EE \cap TY_\theta$. 
We additionally require that $\xi_\theta$ is a positive contact
structure for the canonical orientation of $Y_\theta$.
\item $\EE$ is transverse to $K$.
\item $\WW$ restricts to a linear vector field on each connected component of $K$. 
\item For each $\theta \in S^1$ there is a collar $U\simeq \partial Y_\theta\times(-1,0]$ of $\partial Y_\theta$ in $Y_\theta$ such that $K=\partial Y_\theta\times\{0\}$ and the characteristic foliation on each connected component of $\partial Y_\theta\times\{r\}, r\in(-1,0],$ is linear. 
\end{itemize}
\end{defn}

\begin{rmk} \label{rem:uni} Since all linear vector fields on $T^2$ are homotopic, 
two even contact structures supported by an open book are homotopic through even contact structures if the contact structures in the pages are homotopic as $1$-parametric families of plane fields: 
The associated kernel foliations are homotopic through foliations 
which are transverse to the interiors of the pages and induce linear vector fields on the binding.

Two even contact structures on $M$ supported by the same open book
$(K,\theta )$ are not necesarily homotopic as formal even contact structures. Out of the loop of contact structures, we construct a gluing contactomorphism, by Gray's stability. A sufficient condition for two different even contact structures to be homotopic through formal even contact structures is that the associated gluing contactomorphisms are isotopic through formal contactomorphisms. In particular, if the return maps are contact isotopic, then the even contact structures are formally homotopic and therefore homotopic  through even contact structures according to \cite{mcduff}.

Note that the uniqueness holds in the $3$--dimesional contact case, because the return map is an exact symplectomorphism of a surface and two symplectomorphisms of a surface are isotopic through symplectomorphims if they are isotopic through diffeomorphisms.
\end{rmk}

\begin{example} \label{ex:open book even}
Let $\xi$ be a contact structure on $Y$ carried by an open book $(K,\theta)$, and $\alpha$ an adapted contact form. Its Reeb vector field $X$  is transverse to the pages and tangent to the binding. We assume that $\xi$ is trivial as a vector bundle.

Let $M=\widetilde{\mathbb{P}\xi}$ be the manifold consisting of $1$-dimensional oriented subspaces of the contact planes and 
$$
\mathrm{pr} : \widetilde{\mathbb{P}\xi} \longrightarrow M
$$
the $S^1$-fibration which sends a line in $\xi_p$ to $p$. Then $\mathrm{pr}^*\alpha$ defines an orientable even contact structure $\mathcal{E}$ whose isotropic foliation is tangent to the fibers of $\mathrm{pr}$.

 We want to modify $\mathcal{E}$ so that the open book decomposition $(\mathrm{pr}^{-1}(K),\theta\circ\mathrm{pr})$ of $\widetilde{\mathbb{P}\xi}$ is adapted to the resulting even contact structure, that will be denoted  $\EE_\varepsilon$. For this we identify $S^1\times Y$ with $\widetilde{\mathbb{P}\xi}$, denote $t$ the coordinate in $S^1$ and let $\widetilde{X}$ be the lift of the Reeb field $X$ to $S^1 \times Y$ using the connection $ \{ 0 \}\oplus TY$.   For $\varepsilon>0$ small enough the $1$-form
$$
\alpha_\varepsilon := \mathrm{pr}^*\alpha-\varepsilon dt
$$
still defines an even contact structure $\EE_\varepsilon$. Let $\partial_t$ denote the vector field tangent to the fibers dual to $dt$.  Then $\partial_t+\varepsilon \widetilde{X}$ is tangent to $\EE_\varepsilon$ and this vector field preserves $\EE_\varepsilon$. The isotropic foliation of $\mathcal{E}_\varepsilon$ is therefore spanned by $\partial_t+\varepsilon \widetilde{X}$ and the open book $(\mathrm{pr}^{-1}(K),\theta\circ\mathrm{pr})$ supports $\EE_\varepsilon$ provided that $\varepsilon>0$.   
\end{example}

When one views an open book decomposition supporting an even contact structure as a mapping torus, then the monodromy is a contact diffeomorphism of a page. Using Eliashberg's classification of overtwisted contact structures \cite{overtwisted} it is easy to obtain contact diffeomorphisms from diffeomorphisms which preserve a given plane field up to homotopy. 
\begin{prop} \label{p:fromframings to contact diffeos}
Let $Y$ be a closed $3$-manifold, $e= (e_0 ,e_1 ,e_2 )$ be a parallelization
of $TY$ and $h$ be a diffeomorphism of $Y$ such that  $h_* (e )$ is homotopic to $e$. Then there exists a contact structure $\xi$ on $Y$, together with a contact diffeomorphism $\psi$ of $(Y,\xi )$ isotopic to $h$ such that $\xi$ contains a non vanishing vector field $X$ with $\psi_* X$ homotopic to $X$ in $\xi$.
\end{prop}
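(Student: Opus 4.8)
Let me think about how to prove this. We have a parallelization $e = (e_0, e_1, e_2)$ of $TY$ and a diffeomorphism $h$ with $h_*(e)$ homotopic to $e$. We want a contact structure $\xi$, a contactomorphism $\psi$ isotopic to $h$, and a nonvanishing $X \in \xi$ with $\psi_* X$ homotopic to $X$ in $\xi$.

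The natural idea: use the parallelization to get a specific plane field (say $\xi_0$ spanned by $e_1, e_2$, with $e_0$ as the "Reeb-like" direction) and a specific vector field in it. Then use Eliashberg's h-principle for overtwisted contact structures to promote a formal contact structure homotopic to $\xi_0$ to an actual overtwisted contact structure $\xi$. Since $h_*(e)$ is homotopic to $e$, the diffeomorphism $h$ preserves the homotopy class of the plane field $\xi_0$ and the vector field $e_1$ inside it. Then we want to upgrade $h$ to a contactomorphism.

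The key mechanism for the last step is this: $h_* \xi$ is overtwisted and, being homotopic as a plane field to $\xi$ (since $h$ preserves the homotopy class of $\xi_0$, which we arranged $\xi$ to be homotopic to), Eliashberg's classification says $h_*\xi$ is isotopic to $\xi$. So there is an isotopy $g_t$ with $g_0 = \mathrm{id}$ and $(g_1 \circ h)_* \xi = \xi$; set $\psi = g_1 \circ h$, which is a contactomorphism isotopic to $h$. The vector field condition: we need $X \in \xi$ with $\psi_* X \simeq X$ in $\xi$. We take $X$ to be, up to homotopy in $\xi$, the image of $e_1$; since $h_*(e) \simeq e$ we have $h_* e_1 \simeq e_1$ as sections of $TY$, hence (projecting, or since both lie in homotopic plane fields) $\psi_* X \simeq X$ as vector fields in $\xi$.

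Here is the plan in order. First I would fix the plane field $\xi_0 = \langle e_1, e_2\rangle$ determined by the parallelization, and the vector field $e_1$ inside it. The pair $(\xi_0, e_1)$ is then a formal datum preserved up to homotopy by $h$. Second, invoke Eliashberg's h-principle \cite{overtwisted}: there is an overtwisted contact structure $\xi$ homotopic as a plane field to $\xi_0$; we may further arrange that the homotopy can be chosen so a nonvanishing $X \in \xi$ is carried to something homotopic to $e_1$. Third, because $h_*(e) \simeq e$, the plane field $h_* \xi$ is homotopic to $h_*\xi_0 \simeq \xi_0 \simeq \xi$ and is again overtwisted; by Eliashberg's classification overtwisted contact structures homotopic as plane fields are isotopic, so there is a contact isotopy $g_t$ with $(g_1 \circ h)_*\xi = \xi$. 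Set $\psi = g_1 \circ h$. Fourth, check $\psi_* X \simeq X$ in $\xi$: since $g_1$ is isotopic to the identity it acts trivially on homotopy classes of vector fields, and $h_* e_1 \simeq e_1$; transporting through the homotopies relating $X$ to $e_1$ and $\xi$ to $\xi_0$ gives $\psi_* X \simeq X$ as sections of the plane bundle $\xi$.

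The main obstacle I expect is the fourth step, namely keeping careful track of the vector field condition \emph{inside} $\xi$ rather than merely in $TY$. One has to be sure that the homotopy $\xi \simeq \xi_0$ can be used to compare $X \in \xi$ with $e_1 \in \xi_0$ as sections of an $S^1$-bundle (the unit sphere bundle of the plane field), and that the two separate uses of Eliashberg's theorem (existence, and then the isotopy $g_t$) are compatible with this comparison — i.e. that the relevant homotopies can be chosen coherently so no extra winding is introduced. A secondary point is that Eliashberg's theorem gives an isotopy of contact structures, which by Gray stability yields the ambient contact isotopy $g_t$; this is standard but needs to be stated. Everything else is formal.
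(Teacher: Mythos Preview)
Your outline is essentially the paper's approach: produce an overtwisted contact structure in the homotopy class of the plane field $\langle e_0,e_1\rangle$, use Eliashberg's classification to obtain a path of contact structures from $\xi$ to $h_*\xi$, realize it by Gray as an ambient isotopy, and compose with $h$. You have also correctly located the one non-formal step, namely verifying that $\psi_*X$ is homotopic to $X$ \emph{inside} $\xi$ rather than merely in $TY$.

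Your sketch of that step, however, does not close the gap. The sentence ``since $g_1$ is isotopic to the identity it acts trivially on homotopy classes of vector fields'' only gives a homotopy in $TY$; it says nothing about sections of the plane bundle $\xi$, since $g_t$ does not preserve $\xi$. The danger you name is exactly the issue: the path of contact structures from $h_*\xi$ to $\xi$ produced by Eliashberg's classification need not be homotopic (rel endpoints, among plane fields) to the concatenation $h_*\xi \simeq h_*\xi_0 \simeq \xi_0 \simeq \xi$ that carries your framing, so the transported vector field could acquire winding.

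The paper handles this by invoking the \emph{parametric} form of Eliashberg's theorem rather than applying the classification twice. One first builds the $1$-parameter family of plane fields $\xi_t=\langle e_0^t,e_1^t\rangle$ interpolating between $\xi_0$ and $h_*\xi_0$, arranges a fixed overtwisted disk for all $t$, and then obtains a $2$-parameter family $\xi_{t,s}$ with $\xi_{t,0}=\xi_t$, $\xi_{t,1}$ contact, and $\xi_{1,s}=h_*\xi_{0,s}$. Because this family lives over a contractible square, the framing $(e_0^t,e_1^t)$ extends to $(e_0^{t,s},e_1^{t,s})$ with $\xi_{t,s}=\langle e_0^{t,s},e_1^{t,s}\rangle$ and $h_*e_i^{0,1}=e_i^{1,1}$. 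Now Gray applied to the contact path $\xi_{t,1}$ gives $\varphi_t$ with $(\varphi_t)_*\xi_{0,1}=\xi_{t,1}$, one sets $\psi=\varphi_1^{-1}\circ h$ and $X=e_0^{0,1}$, and the homotopy $\psi_*X\simeq X$ in $\xi=\xi_{0,1}$ is given explicitly by the path $t\mapsto (\varphi_t^{-1})_*e_0^{t,1}$. This is precisely the coherent choice of homotopies you said was needed.
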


\begin{proof} 
Denote $e_i^0=e_i$, $i=0,1,2$; $e_i^1=h_*(e_i)$, $i=0,1,2$. By hypothesis, there exists a family $(e_0^t, e_1^t, e_2^t)$ interpolating between the two previous framings. Denote $\xi_t= \langle e_0^t, e_1^t \rangle$. By the contractibility of the ball we may assume that after homotopy of the framing $e_i$ and isotopy of the diffeomorphism $h$, there exists a ball $U \subset Y$  such that $\xi_{t|U}= \xi_{0|U}$ is contact and $\xi_{0|U}$ contains an overtwisted disk. By \cite{overtwisted}, there exists a continuous family $\xi_{t,s}$ such that $\xi_{t,0}= \xi_t$ and $\xi_{t,1}$ are contact structures. We want to be careful by applying \cite{overtwisted} as follows:
\begin{enumerate}
\item first for $t=0$, finding $\xi_{0,s}$.
\item then we have a solution for $t=1$, given by $\xi_{1,s}= h_* \xi_{0,s}$
\item the relative character with respect to the parameter of \cite{overtwisted} allows us to extend it to $\xi_{t,s}$.
\end{enumerate}
By contractibility of the interval, there is an extension $(e_0^{t,s}, e_1^{t,s})$ that coincides with $(e_0^t, e_1^t)$ for $s=0$. Moreover, $\xi_{t,s} = \langle e_0^{t,s}, e_1^{t,s} \rangle$, $h_*e_0^{0,1}= e_0^{1,1}$ and $h_*e_1^{0,1}= e_1^{1,1}$.

By Gray's stability, there exists a flow $\varphi_t: Y \to Y$ such that $(\varphi_t)_* \xi_{0,1}= \xi_{t,1}$. Therefore $(\varphi_1^{-1} \circ h)_* \xi_{0,1} = \xi_{0,1}$ shows that $(\varphi_1^{-1} \circ h)$ is the required contactomorphism . Finally, $X=e_0^{0,1}$ satisfies the stated hypothesis.

\end{proof}

The same proof works for compact manifolds with boundary when the
diffeomorphism $h$ is the identity on the boundary and $h_*(e)$ is homotopic to $e$ relative to the boundary. We then obtain a contact diffeomorphism $\psi$ supported in the interior of $Y$ and a nowhere vanishing Legendrian vector field $X$ such that $\psi_*(X)$ is homotopic (relative to the boundary) to $X$ through nowhere vanishing Legendrian vector fields.  

The assumption in Proposition~\ref{p:fromframings to contact diffeos} that $h$ preserves a framing up to homotopy turns out to be not too restrictive by an observation of H.~Geiges \cite{geiges rev}.

\begin{lemma} \label{l:geiges trick}
Let $M$ be a closed oriented $4$-manifold with trivial tangent bundle and $X$ a nowhere vanishing vector field on $M$. Then there are vector fields $e_1,e_2,e_3$ such that   $X,e_1,e_2,e_3$ is a framing. 
\end{lemma}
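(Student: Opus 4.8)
The plan is to reduce the statement to a standard obstruction-theoretic fact about sections of sphere bundles over a $4$-complex. Since $M$ is parallelizable, fix a trivialization $TM\cong M\times\R^4$, so that a nowhere vanishing vector field $X$ is the same datum as a map $f_X\colon M\to S^3$. Completing $X$ to a framing $X,e_1,e_2,e_3$ amounts to finding a trivialization of the rank-$3$ bundle $X^\perp$ (the orthogonal complement with respect to any fixed metric, which under the trivialization is the pullback $f_X^*(TS^3)$), or equivalently a nowhere vanishing section of that bundle together with a trivialization of the remaining rank-$2$ piece. So the whole problem is: the oriented rank-$3$ bundle $\eta=X^\perp$ over the $4$-manifold $M$ is trivial.

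First I would compute the obstructions to trivializing $\eta$. An oriented rank-$3$ bundle over a CW-complex of dimension $4$ is classified by a map to $BSO(3)$, and the primary obstructions to a trivialization live in $H^{i+1}(M;\pi_i(SO(3)))$; concretely the relevant groups are $H^2(M;\Z/2)$ (the second Stiefel--Whitney class $w_2(\eta)$) and $H^4(M;\Z)$, with $H^3$ contributing nothing since $\pi_2(SO(3))=0$. Now $\eta\oplus\R\cong TM$ is trivial, so $w_2(\eta)=w_2(TM)=0$ and $p_1(\eta)=p_1(TM)=0$; moreover $e(\eta)=0$ since $\eta$ has odd rank (or simply because $\eta$ has a nowhere zero section is exactly what we want to show — instead use $p_1$). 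The remaining $H^4$ obstruction is detected by an Euler-class-type invariant of the rank-$3$ bundle after it has been trivialized over the $3$-skeleton; I would argue this top obstruction vanishes because $\eta$ is stably trivial and the top obstruction to trivializing a rank-$3$ bundle over a $4$-complex is a stable invariant (it is pulled back from $B(SO/\!\!\sim)$ in the appropriate range, or: two stably isomorphic rank-$3$ bundles over a $4$-complex are isomorphic, since $SO(3)\to SO$ is $3$-connected on homotopy — $\pi_i(SO/SO(3))=0$ for $i\le 3$ and the relevant obstruction in degree $4$ uses $\pi_3$). Hence $\eta$ is trivial, which gives the desired $e_1,e_2,e_3$.

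The cleanest way to package all of this, and the route I would actually write down, is: $\eta\oplus\underline{\R}$ is trivial, and a rank-$3$ bundle over a manifold of dimension $\le 3+1=4$ that becomes trivial after adding a trivial line is itself trivial — this is exactly the statement that in the metastable range $\mathrm{Vect}^{SO}_k(X)\to\mathrm{Vect}^{SO}_{k+1}(X)$ is injective when $\dim X\le 2k-1$; here $k=3$, $\dim X=4\le 5$. So I would invoke this stable-range injectivity directly, with $X=M$.

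The main obstacle is the borderline nature of the dimension count: one must make sure the stabilization result applies with $k=3$ and $\dim M=4$ (it does, since $4\le 2\cdot 3-1=5$), and in particular that there is no leftover obstruction in $H^4(M;\pi_3(S^3)$-type$)$ groups. If one prefers an elementary argument avoiding quotation of the metastable-range theorem, the work concentrates entirely in handling the top cell: choose the trivialization $e_1,e_2,e_3$ of $\eta$ over $M\setminus\{pt\}$ (possible since the obstructions in $H^2,H^3$ vanish as computed above), and then show the resulting element of $\pi_3(SO(3))\cong\Z$ measuring the failure to extend over the top cell is killed by the freedom to modify the trivialization over the $3$-skeleton — and this last identification with a stable (hence vanishing, by $p_1(\eta)=0$ and stable triviality) invariant is the crux. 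Everything else — fixing a metric, identifying $X^\perp\cong f_X^*TS^3$, and assembling $e_1,e_2,e_3$ into the framing — is routine.
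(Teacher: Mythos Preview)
Your approach is entirely different from the paper's. The paper gives a one-line argument: identify $TM\simeq M\times\R^4$ with $M\times\mathbb{H}$ and set $e_1=iX$, $e_2=jX$, $e_3=kX$. Since left multiplication by a unit quaternion is orthogonal and $1,i,j,k$ are orthonormal, $X,e_1,e_2,e_3$ is automatically a framing. No obstruction theory enters.

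Your obstruction-theoretic strategy (show $X^\perp$ is stably trivial, then show stably trivial oriented rank-$3$ bundles over a closed oriented $4$-manifold are trivial) does reach the correct conclusion, but two of the justifications you give are wrong as stated. First, the ``metastable range'' assertion that $\mathrm{Vect}^{SO}_k(X)\to\mathrm{Vect}^{SO}_{k+1}(X)$ is injective whenever $\dim X\le 2k-1$ is false: take $k=2$ and $X=S^2$, so $\dim X=2\le 3$; the degree-$2$ complex line bundle over $S^2$ is a real oriented rank-$2$ bundle with Euler class $2$, hence nontrivial, yet it becomes trivial after adding a trivial line (rank-$3$ oriented bundles over $S^2$ are detected by $w_2$, and here $w_2=c_1\bmod 2=0$). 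Second, the claim that $\pi_i(SO/SO(3))=0$ for $i\le 3$ is wrong: the map $\pi_3(SO(3))\to\pi_3(SO)$ is multiplication by $2$, so $\pi_3(SO/SO(3))\cong\Z/2$. What \emph{is} true is that $\pi_3(SO(3))\to\pi_3(SO)$ is injective; using this, or more cleanly by lifting the spin rank-$3$ bundle $X^\perp$ to an $SU(2)$-bundle $P$ and noting that $p_1(X^\perp)=-4\,c_2(P)=0$ forces $c_2(P)=0$ in the torsion-free group $H^4(M;\Z)\cong\Z$, one can repair the argument. So your route is salvageable, but the quaternion trick bypasses all of this.
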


\begin{proof}
We identify $TM\simeq M\times \R^4$ with $M\times\mathbb{H}$. Then $X$ together with $e_1=iX$, $e_2=jX$ and $e_3=kX$ is a framing of $TM$. 
\end{proof}
In particular, we can apply this lemma to the suspension vector field on the mapping torus of a diffeomorphism $h: N\lra N$ when the mapping torus is parallelizable. Projecting $e_1,e_2,e_3$ to the fibers of a mapping torus we obtain a framing of $N$ with the desired properties.

\begin{thm} \label{t: suspension} 
Let $Y$ be a compact $3$-manifold bounded by a non-empty union of tori, and 
$\xi$ a positive contact structure on $Y$ which prints a linear characteristic linear
foliation on $\partial Y$. 
If $h$ is  a  contact diffeomorphism of $(Y,\xi )$ which is the identity near the boundary, 
then the relative mapping torus of $(Y,h)$ carries an even contact structure which induces $\xi$ on (a small retraction of) the
pages.  This even contact structure is uniquely determined up to homotopy amongst even
contact structures carried by the triple $(Y,\xi , h)$.
\end{thm}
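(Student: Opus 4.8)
The plan is to construct the even contact structure by hand on the relative mapping torus, following the model of Lemma~\ref{l: e product}, and then to check adaptedness clause by clause against Definition~\ref{d:adapted even}. First I would choose a contact form $\alpha$ for $\xi$ on $Y$ and a contact vector field realizing the behaviour we want near the boundary. Since $h$ is a contactomorphism which is the identity near $\partial Y$, the form $h^*\alpha$ and $\alpha$ define the same contact structure; by Gray stability (relative to the boundary, where $h = \mathrm{id}$) there is an isotopy $\psi_s$ of $Y$, fixing a neighbourhood of $\partial Y$, with $\psi_1^*(h^*\alpha)$ a positive multiple of $\alpha$. I would use this isotopy to build, on $Y \times [0,1]$, a $1$-form interpolating between $\alpha$ at $\varphi = 0$ and (a multiple of) $h^*\alpha$ at $\varphi = 1$, so that after the identification $(h(x),0)\sim_h(x,1)$ the form descends to the mapping torus. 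Concretely one takes $\EE$ of the shape $\xi_\varphi \oplus \R(\partial_\varphi + L_\varphi)$, a ``twisted product'' as in Lemma~\ref{l: e product}, where $L_\varphi$ is an appropriately chosen $\varphi$-dependent contact vector field whose flow accounts for the monodromy; Lemma~\ref{l: e product} then gives exactly the even contact condition and identifies the kernel foliation $\WW$ with $\partial_\varphi + L_\varphi$, which is transverse to the pages by construction.

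Next I would verify the boundary clauses of Definition~\ref{d:adapted even}. Near $\partial Y$ the monodromy is trivial, so on that collar the even contact structure is the genuine product $\xi \oplus \R\partial_\varphi$ from the motivating example; one extends across the binding tori by the standard local model $dz - y\,dx$ for an even contact structure whose kernel is $\partial_w$, choosing the gluing so that $\WW$ restricts to a linear (in fact constant-slope) vector field on each torus component of $K$ and the characteristic foliation on the parallel tori $\partial Y_\theta \times \{r\}$ is linear. This is where the hypothesis that $\partial Y$ is a union of tori and that $\xi$ prints a linear foliation on $\partial Y$ is used: linear foliations on $T^2$ are exactly what the definition demands, and we have freedom to interpolate because all linear foliations on $T^2$ are homotopic (Remark~\ref{rem:uni}). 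The transversality of $\EE$ to $K$ and positivity of $\xi_\theta$ for the canonical page orientation are then local checks in this model.

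For the uniqueness statement I would argue as in Remark~\ref{rem:uni}: any even contact structure carried by $(Y,\xi,h)$ has, by definition, kernel foliation transverse to the pages and linear on the binding, so it is a mapping torus of a contact form on $Y$ inducing $\xi$ with return map contact-isotopic to $h$; since the space of contact forms for a fixed $\xi$ is convex (rescaling) and the relevant gluing data is a contactomorphism well-defined up to contact isotopy, two such even contact structures are homotopic as formal even contact structures, hence homotopic through even contact structures by the $h$-principle of McDuff~\cite{mcduff}. In the $3$-dimensional heuristic that underlies this, the return map is an exact symplectomorphism of the page and isotopic-through-diffeomorphisms implies isotopic-through-symplectomorphisms, which is what makes the gluing data unique up to the right kind of isotopy.

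The main obstacle I expect is the boundary/binding analysis rather than the interior construction: one must produce a single model near $K$ that simultaneously (i) matches the twisted-product even contact structure coming from the interior, (ii) makes $\WW$ linear on each torus of $K$, (iii) makes the characteristic foliations on the nearby parallel tori linear, and (iv) is compatible with the coordinates $K \times D^2$ from the open book. Getting all four to hold at once — in particular reconciling the slope of the linear foliation that $\xi$ prints on $\partial Y$ with the slope forced by the fibered structure, and doing so without destroying the even contact condition — is the delicate point; the freedom comes from the contractibility of the space of linear foliations on $T^2$ and from the ability to rescale and interpolate contact forms, but organizing the interpolation cleanly is the real work.
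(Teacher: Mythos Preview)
Your overall strategy matches the paper's, but there are two genuine gaps.

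First, the binding extension. You correctly flag this as the delicate point, but you do not actually resolve it, and the Darboux model $dz-y\,dx$ is not what is needed. The concrete obstruction is this: if near $\partial Y\times S^1$ you keep the ``genuine product'' $\xi\oplus\R\partial_\varphi$, then the isotropic line is the meridian direction, which collapses at $r=0$ and therefore cannot extend to a nowhere-vanishing vector field that is tangent and linear on $K=T^2\times\{0\}$. The missing idea is to \emph{tilt} $\WW$ before gluing: pick a contact vector field $L$ on $(Y,\xi)$ supported in a collar of $\partial Y$ and equal to $\partial_y$ near $\partial Y$ (this exists by the usual bijection between contact vector fields and functions), and replace $\partial_\varphi$ by $\partial_\varphi+L$, so that $\EE=\xi\oplus\R(\partial_\varphi+L)$ is still even contact by Lemma~\ref{l: e product}. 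Now near the boundary $\WW=\langle\partial_\varphi+\partial_y\rangle$, and this matches the explicit model $\EE_b=\ker(dx+r^2\,d\varphi-r^2\,dy)$ on $T^2\times D^2$, whose isotropic foliation is $\partial_\varphi+\partial_y$ and restricts to $\partial_y$ on $K$; one then adjusts the radius $r_0$ so that the slopes of the characteristic foliations agree. (Incidentally, your Gray-stability interpolation on the interior is unnecessary: since $h$ already preserves $\xi$, the plane field $\xi\oplus\R\partial_\varphi$ descends directly to the mapping torus with no $\varphi$-dependent $L_\varphi$ required.)

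Second, your uniqueness argument proves a weaker statement than the theorem claims. The assertion is uniqueness up to homotopy \emph{amongst even contact structures carried by $(Y,\xi,h)$}, so the homotopy must remain inside the supported class throughout; McDuff's $h$-principle only produces a homotopy through arbitrary even contact structures, with no control over adaptedness of the intermediate ones. The paper's argument is instead direct: any supported $\EE$ has the form $\xi\oplus\R(\partial_\varphi+L)$ with $L$ tangent to the pages and to $K$, and by Lemma~\ref{l: e product} such an $L$ is necessarily a contact vector field for $\xi$. Given two such, $\EE_s=\xi\oplus\R(\partial_\varphi+(1-s)L+sL')$ is even contact for every $s$ (contact vector fields form a vector space) and is visibly carried by $(Y,\xi,h)$, giving the required homotopy within the supported class.
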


\begin{proof} 
For simplicity, we assume that $\partial Y$ is connected. Let $Y_s=Y\times[0,1]/\sim$ be the mapping torus of $(Y,h)$. We denote the suspension coordinate on $Y_s$ by $\varphi$ and fix a coordinate system $(x,y)$ on $T^2\simeq\partial Y$ such that $\partial Y$ is linear in terms of the coordinates. The suspension vector field is $\partial_\varphi$ and $D^2_{r_0}\subset\R^2$ is a disc of radius $r_0$. 

We view  $M$ as the mapping torus $Y_s= Y\times [0,1] /\sim$ of $(Y,h)$,with a neighborhood $N(K)=\partial Y \times D^2_{r_0}$ of the binding $K$ of the binding  attached along the boundary such that $\{y\}\times\partial D^2_{r_0}$ is mapped to  $\{y\}\times S^1$  in $\partial Y_s=\partial Y\times S^1$ for all $y\in\partial Y$.

Because $h$ preserves $\xi$ we get a well-defined $2$-plane field $\xi_s$ on $Y_s$ which is tangent to the fibers of $\theta : Y_s\lra S^1$. Together with the suspension vector field $\partial_\varphi$ this plane field spans an even contact structure $\EE_s$ on $Y_s$ whose isotropic foliation is directed  by $\partial_\varphi$. 

Since $h$ is the identity on a neighborhood of $\partial Y$, and the characteristic foliation $\xi(\partial Y)$ is linear we can choose a collar $U\simeq \partial Y\times[0,2\varepsilon),\varepsilon>0,$ such that $h|_U=\mathrm{id}_U$ and 
\begin{equation*}  
\xi=\ker(\cos(r+a)dx-\sin(r+a)dy).
\end{equation*}
where the parameter $a\in \R$ is determined by the slope of the linear characteristic foliation $\xi(\partial Y)$. Since we are free to choose the coordinates $(x,y)$, we may assume that the slope is positive in terms of these coordinates. On $U\times S^1\subset Y_s$ we consider the even contact structure  $\EE_0=\xi\oplus\R\partial_\varphi$. The isotropic foliation is of course spanned by $\partial_\varphi$.  We want to homotope $\EE_0$ through even contact structures such that isotropic foliation of the resulting even contact structure $\EE_1$ is $\partial_\varphi+\partial_y$.

Recall that the space of contact vector fields on a given contact manifold is in one-to-one correspondence to the space of smooth functions once a contact form is fixed (cf. \cite{mcduffsalamon} Section I.3.4). This implies the existence of  a contact vector field $L$  with support in $U$ and $L=\partial_y$ on a collar  $V=\partial Y\times[0,\varepsilon]\subset U$ of $\partial Y$.

We use the same notation for $L$ and its lift to the product $Y\times S^1$. The desired homotopy of even contact structures is  $\EE_s=\xi_s\oplus\R(\partial\varphi+sL)$ with $s\in[0,1]$.

Finally, we extend $\EE_s$ to an even contact structure over $V\cup N(K)$: Let $(r,\varphi)$ be polar coordinates on the second factor of  $N(K)=\partial Y_s\times \R^2$ with $r\in [0,1]$ and  
$$
\EE_b=\ker(dx+r^2d\varphi-r^2dy).
$$
One can check by a simple computation that $\EE_b$ is an even contact structure whose isotropic foliation is spanned by $\partial_\varphi+\partial_y$. 
In particular, $\EE_b$ induces a contact structure on $\{\varphi=\varphi_0, 0<r\le r_0\}$. The characteristic foliation on $T^2=T^2\times\{r=r_0,\varphi=\varphi_0\}$ is linear and the slope is $1/r^2>0$. 

Therefore, we can choose $r_0$ so that the even contact structures $\EE_b$ and $\EE_1$ on $Y_s$ can be glued, and we obtain an even contact structure supported by the resulting open book on $Y_s\cup_\partial \left(\partial Y\times D^2_{r_0}\right)$. The pages of this open book are formed by fibers of $Y_s\lra S^1$ with $\{\varphi=\varphi_0, 0<r\le r_0\}$ attached  along $\partial Y$. 

We now prove the uniqueness part of the Theorem. Let $\EE,\EE'$ be two even contact structures carried by the open book decomposition $(Y,\xi,h)$.
Their isotropic foliations $\WW$ and $\WW'$ are spanned by vector fields of the form $\partial_\varphi + L$ and $\partial_\varphi + L'$ on both  $N(K)$ and $Y_s$,
where $L$ and $L'$ are tangent to the pages and to $K$. 

The path of hyperplanes fields $\EE_s = \xi \oplus \R(\partial_\varphi +(1-s)L+sL')$, $s\in [0,1]$ interpolates between $\EE$ and $\EE'$, satisfies the even contact condition and is carried by $(Y,\xi ,h)$.
\end{proof}

\begin{rmk}
Note that we are able to prove that the built even contact structure is unique (cfr. Remark \ref{rem:uni}) because we start with an open book given by a triple $(Y, \xi, h)$ and not by a pair $(K, \theta)$.
\end{rmk}

Using the flexibility properties of overtwisted contact structures \cite{overtwisted}, we can show the existence of a contact structure satisfying the assumptions of Theorem~\ref{t: suspension} for a given open book decomposition $(Y,h)$ such that $\partial Y$ is a union of tori and $h$ preserves a plane field up to homotopy relative to the boundary. 

\begin{thm} \label{t: suspension overtwisted} 
Let $Y$ be a compact $3$-manifold bounded by a non-empty union of tori, and 
$h$ a diffeomorphism of $Y$ which is the identity near the boundary such that there exists a plane field $P$ of $Y$ so that $h_* P$ is homotopic to $P$ relatively to $\partial Y$.

Then the relative mapping torus of $h$ carries an even contact structure.
\end{thm}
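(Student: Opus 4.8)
The plan is to reduce the statement to Theorem~\ref{t: suspension}: from the plane field $P$ I will produce a positive contact structure $\xi$ on $Y$ with linear characteristic foliation on $\partial Y$, together with a contact diffeomorphism $\psi$ of $(Y,\xi)$ which is the identity near $\partial Y$ and whose relative mapping torus is diffeomorphic to that of $h$; then Theorem~\ref{t: suspension} concludes. The engine is the proof of Proposition~\ref{p:fromframings to contact diffeos} in its version for manifolds with boundary. Observe that in that proof the parallelization intervened only to produce the Legendrian vector field $X$ in the conclusion; the construction of the contact structure and of the contactomorphism isotopic to $h$ uses only a plane field preserved by $h$ up to homotopy, which is exactly the hypothesis here.

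First I would normalize $P$ near the boundary. Since $h$ is the identity near $\partial Y$ and $\partial Y$ is a union of tori, after a homotopy of $P$ supported away from a neighbourhood of $\partial Y$ — which does not affect the relation $h_*P\simeq P$ rel $\partial Y$ — I may assume that on a collar $\partial Y\times[0,2\varepsilon)$ the plane field $P$ equals the model $\ker(\cos(r+a)\,dx-\sin(r+a)\,dy)$, a positive contact structure whose characteristic foliation is linear on every torus $\partial Y\times\{r\}$. Then, exactly as in the proof of Proposition~\ref{p:fromframings to contact diffeos}, after a further homotopy of $P$ in the interior and an isotopy of $h$ rel $\partial Y$, I would arrange a ball $U\subset\mathrm{int}\,Y$ on which a chosen homotopy $\xi_t$ from $P$ to $h_*P$ is constant, contact, and overtwisted.

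Next I would apply Eliashberg's $h$-principle for overtwisted contact structures \cite{overtwisted} in the parameter, relative to the locus where $\xi_t$ is already contact, which now contains the boundary collar. Running the three steps of the proof of Proposition~\ref{p:fromframings to contact diffeos} (solve at $t=0$, transport by $h_*$ to $t=1$, interpolate by the relative parametric statement) I obtain a family $\xi_{t,s}$ with $\xi_{t,0}=\xi_t$, with $\xi_{t,1}$ contact and equal to the model near $\partial Y$, and with $\xi_{1,1}=h_*\xi_{0,1}$. Gray stability then gives an isotopy $\varphi_t$, supported in $\mathrm{int}\,Y$ because the family is constant near $\partial Y$, with $(\varphi_t)_*\xi_{0,1}=\xi_{t,1}$. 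Hence $\psi:=\varphi_1^{-1}\circ h$ is a contactomorphism of $\xi:=\xi_{0,1}$, equal to the identity near $\partial Y$, isotopic to $h$ rel $\partial Y$, and $\xi$ has linear characteristic foliation on $\partial Y$. Since isotopic diffeomorphisms have diffeomorphic relative mapping tori, Theorem~\ref{t: suspension} applied to $(Y,\xi,\psi)$ yields an even contact structure on the relative mapping torus of $(Y,\psi)$, hence on that of $h$.

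I expect the main obstacle to be precisely the boundary normalization in the first step: checking that $P$ may be taken to agree with the linear model near $\partial Y$ without spoiling $h$-invariance up to homotopy rel $\partial Y$, where one uses crucially that $h$ is the identity near $\partial Y$ and that the boundary consists of tori. Everything after that is the relative version of the argument already carried out for Proposition~\ref{p:fromframings to contact diffeos}, together with the elementary remark on mapping tori. (An alternative would be to build directly a formal even contact structure on the closed relative mapping torus — the suspension line field together with a suspension of $P$, extended over the binding as in the $\EE_b$ computation in the proof of Theorem~\ref{t: suspension} — and invoke \cite{mcduff}; but this meets the analogous difficulty at the binding.)
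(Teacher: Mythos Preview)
Your approach is essentially the paper's own: the authors simply say that a boundary--relative variant of Proposition~\ref{p:fromframings to contact diffeos} (using only the plane field $P$, not a full framing) produces an overtwisted contact structure $\xi$ and a contactomorphism isotopic to $h$, and then Theorem~\ref{t: suspension} finishes. You have correctly unpacked that sketch, including the observation that the framing in Proposition~\ref{p:fromframings to contact diffeos} is only needed for the Legendrian line field $X$, which is irrelevant here.

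One slip: in your first step you write ``after a homotopy of $P$ supported \emph{away from} a neighbourhood of $\partial Y$ \ldots\ I may assume that on a collar $P$ equals the model.'' This is backwards---a homotopy supported in the interior cannot change $P$ near $\partial Y$. What you need is a homotopy of $P$ supported \emph{in} the collar where $h=\mathrm{id}$; since $h_*$ is the identity there, $h_*P$ undergoes the same homotopy, and the relation $h_*P\simeq P$ rel $\partial Y$ survives (concatenate the collar homotopy, the old $H_t$, and the reverse collar homotopy; the restriction to $\partial Y$ is a back--and--forth path, hence null--homotopic, so one can straighten to a genuine rel--$\partial Y$ homotopy). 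You already flag this step as the main obstacle, so this is more a matter of phrasing than of strategy; after it, your three--step use of Eliashberg and Gray exactly mirrors the paper's intended ``slightly modified version'' of Proposition~\ref{p:fromframings to contact diffeos}.
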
 

The proof of Theorem~\ref{t: suspension overtwisted} follows from a combination of Theorem~\ref{t: suspension} and of a variant of Proposition~\ref{p:fromframings to contact diffeos}. Under the hypothesis that the monodromy $h$ preserves a plane field $\PP$ up to homotopy relative to the  boundary, a slightly modified version of Proposition~\ref{p:fromframings to contact diffeos} gives an overtwisted contact structure homotopic to $\PP$ and preserved by $h$.
Theorem~\ref{t: suspension} then gives the conclusion.

The following theorem shows that one can always arrange that the page of an open book supporting an even contact structure is overtwisted if one allows homotopies through even contact structures which are not necessarily supported by the open book decomposition. 
In particular, $\xi'$ can be taken overtwisted even if $\xi$ is tight.
\begin{thm} \label{t: sad} 
If $\EE$ is carried by $(Y,h)$ and $\EE$ induces a contact structure $\xi$ on $Y$. Then for every overtwisted contact structure $\xi'$ on $Y$ which is homotopic to $\xi$ as oriented plane field relative to the boundary the even contact structures $\EE,\EE'$ associated to $\xi,\xi'$ are homotopic through even contact structures. 

In particular, $\xi'$ can be taken overtwisted even if $\xi$ is tight.
\end{thm}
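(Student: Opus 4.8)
\emph{Proof sketch.}

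The plan is to reduce the statement, via the $h$-principle for even contact structures \cite{mcduff}, to a homotopy between \emph{formal} even contact structures on the ambient manifold $M$, and then to build that homotopy by hand using the mapping torus picture together with Eliashberg's classification of overtwisted contact structures \cite{overtwisted}. Throughout write $M=\Sigma_0(Y,h)$ for the relative mapping torus, and recall that because $\EE$ is carried by $(Y,h)$ the monodromy $h$ is a contact diffeomorphism of $(Y,\xi)$ which is the identity near $\partial Y$ and $\xi$ prints a linear characteristic foliation on $\partial Y$ (cf. Theorem~\ref{t: suspension} and Definition~\ref{d:adapted even}).

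First I would produce a monodromy for $\xi'$. Since $\xi'$ is overtwisted and homotopic to $\xi$ relative to $\partial Y$, we have $h_*\xi'\simeq h_*\xi=\xi\simeq\xi'$ relative to $\partial Y$; the relative version of Proposition~\ref{p:fromframings to contact diffeos} (as used in the proof of Theorem~\ref{t: suspension overtwisted}), combined with the fact that overtwisted contact structures which are homotopic rel $\partial Y$ as plane fields are isotopic rel $\partial Y$ \cite{overtwisted}, then furnishes a diffeomorphism $\psi$ of $Y$, supported in the interior and isotopic to $h$ rel $\partial Y$, which is a contact diffeomorphism of $(Y,\xi')$. Note $\xi'$ prints a linear characteristic foliation on $\partial Y$ since $\xi'=\xi$ there. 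Applying Theorem~\ref{t: suspension} to the triple $(Y,\xi',\psi)$ yields an even contact structure $\EE'$ on $\Sigma_0(Y,\psi)$ inducing $\xi'$ on the pages; as $\psi$ is isotopic to $h$ rel $\partial Y$, the manifolds $\Sigma_0(Y,\psi)$ and $M$ are diffeomorphic, and fixing such a diffeomorphism we regard $\EE'$ as an even contact structure on $M$. (Isotoping $\xi'$ or changing this identification alters $\EE'$ only by a homotopy through even contact structures, so these choices are harmless.)

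Next I would pass to formal data. By \cite{mcduff} it is enough to join $\EE$ and $\EE'$ through pairs (oriented hyperplane field, oriented line field contained in it) on $M$. The isotropic foliations $\WW$ of $\EE$ and $\WW'$ of $\EE'$ are both transverse to the interiors of the pages and restrict to linear foliations on the toric binding, so by Remark~\ref{rem:uni} (all linear vector fields on $T^2$ are homotopic) they are homotopic as oriented line fields through line fields of this type; lifting such a homotopy to the ambient hyperplane fields, I may assume $\EE$ and $\EE'$ share the same isotropic line field $\WW\cong\R\partial_\varphi$, where $\varphi$ is the suspension coordinate. A hyperplane field on $M$ containing $\WW$ is then the same thing as a loop $\varphi\mapsto\eta_\varphi$ of oriented plane fields on $Y$ satisfying the gluing relation imposed by the monodromy: for $\EE$ this loop is constant equal to $\xi$ (with $h_*\xi=\xi$), while for $\EE'$ it consists of plane fields all homotopic rel $\partial Y$ to $\xi'$, hence to $\xi$. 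Because $\xi$ and $\xi'$ are homotopic rel $\partial Y$ as oriented plane fields on $Y$, after absorbing the isotopy $h\simeq\psi$ into a homotopy of the formal data of $\EE'$ the two loops lie in the same path–component of the space of loops of oriented plane fields compatible with the gluing; concatenating the homotopy $\xi\rightsquigarrow\xi'$ with its monodromy translates in the standard way produces an explicit path between them. This is the required homotopy of formal even contact structures, which by \cite{mcduff} upgrades to a homotopy of genuine even contact structures. The last assertion is then immediate: nothing above used that $\xi$ is overtwisted, so it applies when $\xi$ is tight and $\xi'$ is any overtwisted plane field homotopic to it rel $\partial Y$ (such $\xi'$ exists by \cite{overtwisted}).

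The main obstacle is the final step: the homotopy $\xi\rightsquigarrow\xi'$ on $Y$ is not equivariant with respect to the monodromy at intermediate times, so it does not descend to $M$ on the nose. This is exactly where it is essential that we permit homotopies through even contact structures which need \emph{not} be supported by the open book — on the level of formal data there is no constraint on the intermediate $\eta_\varphi$ beyond the gluing relation, so the only thing to organise is the interplay between a path of plane fields rel $\partial Y$ from $\xi$ to $\xi'$ and the isotopy identifying the two gluings, which is a routine if slightly technical interpolation rather than a genuine obstruction. \qed
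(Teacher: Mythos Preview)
Your proof is correct and follows essentially the same strategy as the paper: use Eliashberg's classification of overtwisted contact structures to replace $h$ by an isotopic diffeomorphism preserving $\xi'$, build $\EE'$ via the suspension construction (Theorem~\ref{t: suspension}), and then reduce to a homotopy of formal even contact structures using the plane-field homotopy $\xi\simeq\xi'$ together with McDuff's $h$-principle. The paper's argument is terser---it simply notes that $h_*\xi'$ is overtwisted and homotopic to $\xi'$, hence isotopic to it, and that the resulting $(\EE',\WW')$ is formally homotopic to $(\EE,\WW)$ because $\zeta_t\oplus\R\partial_\varphi$ interpolates and both kernels are homotopic to $\R\partial_\varphi$---whereas you are more explicit about the monodromy-equivariance issue at intermediate times, correctly observing that it disappears once one works with formal data on $M$ rather than $h$-invariant data on $Y$.
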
 

\begin{proof} 
It is enough to prove the result when $\xi'$ is overtwisted. In this situation, $h_* \xi'$ is overtwisted and homotopic to $\xi'$ rel. $\partial Y$. By Eliashberg's classification of overtwisted contact structures \cite{overtwisted}, $\xi'$ is isotopic to $h_* \xi'$.

We can thus deform $h$ to $h'$ to make $\xi'$ an $h'$-invariant contact structure. Then we construct an even contact structure $\EE'$ carried by $(Y,\xi',h')$. The pair $(\EE',\WW')$ is homotopic to $(\EE,\WW)$ by construction. The h-principle \cite{mcduff} implies that $\EE$ and $\EE'$ are homotopic through even contact structures: On the interior of the suspension of $(Y,h)$, the homotopy of plane fields $\zeta_t, t\in[0,1],$  interpolating between $\xi$ and $\xi'$ induces a homotopy of formal even contact structures $(\zeta_t\oplus\R\partial\varphi,\R\partial\varphi)$,  and both $\WW,\WW'$ are homotopic to $\R\partial_\varphi$ as line fields transverse to the pages.
\end{proof}

Still, rigidity could come from tightness of the pages.
\begin{q}\label{q: tight-even}
  Which even contact structures are carried by at least one open book decomposition with a tight page?
  \end{q}

\section{Open book decompositions and Engel structures}
Let $M$ be a closed oriented $4$-manifold and $(K,\theta)$ an open book decomposition of $M$.
\begin{defn} \label{d:adapted Engel}
An Engel structure $\DD$ is {\em adapted to} (or {\em supported by}) $(K,\theta)$ if the associated even
contact structure $\E =[\DD ,\DD ]$ is adapted to $(K,\theta )$.
\end{defn}

\begin{example} \label{ex:perturb prolong} 
Let $\xi$ be a contact structure on a compact $3$-manifold $Y$ which is trivial as a plane field and is supported by an open book $(K,\theta)$. We fix an adapted contact form $\alpha$ together with the corresponding Reeb vector field $R$ and a trivialization $C_1,C_2$ of $\xi$. Let $\mathrm{pr}:M= S^1 \times Y \lra Y$ be the projection. We denote the horizontal lifts of the trivialization $C_1,C_2$ and of the Reeb vector field $R$ to $S^1 \times Y$ with the same letters, and the coordinate on $S^1$ will be $t$.

For $\varepsilon>0$ small enough the plane field $\DD_k, k\ge 1$, spanned by 
\begin{align} \label{e:contact times circle}
W&=\partial_t+\varepsilon R & X&=\cos(2\pi kt)C_1+\sin(2\pi kt)C_2
\end{align}  
is an Engel structure supported by $(S^1 \times K,\theta\circ\mathrm{pr})$.
\end{example}

This example can be applied to the contact structure on $S(T^*T^2)$ for different $k$. The homotopy class of the induced framing of $S(T^*T^2)\times S^1$ depends on the parity of $k$. Thus, an open book can generate Engel structures which cannot be homotopic through Engel structures.

\subsection{Invariants of Engel structures supported by open books} 

Fix $\DD$ an Engel structure carried by $(Y,h)$, associate an integer $k$ to $\DD$ in the following way. Pick a page $Y_0$ and a collar neighborhood $\partial Y_0 \times (-1,0]$  of $\partial Y_0$ in $Y_0$ such that the characteristic foliation $\xi_0 (\partial Y_0 \times \{ s\} ), s\in (-1,0]$, is linear and where the monodromy $h$ is the identity. Let $x_0 \in \partial Y_0 \times \{ s_0 \}$ be a point in that neighborhood.

Along the path $x_0 \times S^1 \subset \Sigma_0 (Y,h)$, we have a trivialization of $\xi_t =\E \cap TY_t$ at the point $(x_0 ,t), t\in S^1$, by the line field  $C = \xi_t \cap T(\partial Y_t \times \{ s_0\} )$. There is another line field $C' =\DD \cap \xi_t$. We define $k$ to be the rotation number of $C'$ with respect to $C$ in $\xi_t$ when we go around $x_0 \times S^1$.

Now, if $\DD$ and $\DD'$ are two Engel structures  which are carried by the same open book and induce the same contact structure $\xi_0$ on one page $Y_0$, then there are two Legendrian line fields $L =\xi_0 \cap \DD$ and  $L' =\xi_0 \cap \DD'$. The degree of $L'$ with respect to $L$ induces a map which defines a homomorphism 
$$
\delta (\DD, \DD' ) : H_1 (Y_0 ,\Z ) \rightarrow \Z.
$$

If $\WW$ and $\WW'$ are the respective kernels of $\DD$ and $\DD'$ and are equal near $K$, their first return maps  $\phi$ and $\phi'$  of the kernel foliations   are contactomorphisms on the interior of $(Y_0 ,\xi_0 )$ which coincide near $\partial Y_0$. The even contact structures $\EE$ and $\EE'$ are homotopic through even contact structures supported by the open book if and only if there is a family $(\phi_t,\xi_t)$ such that $\phi_t$ is a contactomorphism of the contact structure $\xi_t$ on $Y_0$ such that $\phi_0=\phi$, $\phi_1=\phi'$, $\phi_t$ has support in the interior of $Y_0$ and $\xi_t$ is constant on a neighborhood of $\partial Y_0$.


If we moreover assume $\WW =\WW'$, then $\xi_t =\xi_t'$ for all $t\in S^1$, and $\delta (\DD ,\DD' )\circ \phi_* =\delta (\DD ,\DD' )$.


We now define {\em twisting numbers} which will be used to keep track of the homotopy class of plane fields in even contact structures such that the plane field is tangent to the isotropic foliation of the even contact structure. 

The restriction $\xi_\varphi$ of $\EE$ to each a page is a contact structure. We assume that there is a section $X$ of $\EE$ which is tangent to the pages. In the situations we are going to study, $X$ will be $-\partial_r$ on collars of the boundary of $T^2\times D^2$ and of $Y_s=Y\times[0,1]/\sim$. 

Let $C_1,C_2$ be an oriented framing of $\xi_\varphi$. For a closed oriented curve $\gamma$ there are functions $g_1,g_2$ such that $X(\gamma(t))=g_1(t)C_1(\gamma(t))+g_2(t)C_2(\gamma(t))$. We define the twisting number $\tw(C_1,\gamma)$ to be the degree of the map 
\begin{align*}
S^1 & \lra S^1 \\
t & \lmt \frac{(g_1(t),g_2(t))}{\|(g_1(t),g_2(t))\|}.
\end{align*}
Obviously, $\tw(C_1,\gamma)$ depends only on the homology class of $\gamma$ and on the homotopy class of $C_1$ as a nowhere vanishing section of $\xi_\varphi$.

\subsection{Construction of Engel structures} \label{s:existence proof}

In this section we adapt the construction from Section~\ref{s:even open} to obtain Engel structures and prove the following theorem.

\begin{thm} \label{t:existence} 
Let $Y$ be a compact $3$-manifold bounded by a non-empty union of tori, and $h$  a diffeomorphism of $Y$ which is the identity near the boundary and for  which there exists a framing $e$ of $Y$ so that $h_* e$ is homotopic to $e$ relatively to $\partial Y$.
Then the relative mapping torus of $h$ carries an Engel structure.
\end{thm}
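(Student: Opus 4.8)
The plan is to combine the machinery already established in the paper. By Theorem~\ref{t: suspension overtwisted} (whose hypothesis is satisfied here, since a framing $e$ in particular gives a plane field $P=\langle e_0,e_1\rangle$ with $h_*P$ homotopic to $P$ rel $\partial Y$), the relative mapping torus $M=\Sigma_0(Y,h)$ already carries an even contact structure $\EE$ adapted to the open book. Moreover the proof of Theorem~\ref{t: suspension overtwisted} proceeds via the modified Proposition~\ref{p:fromframings to contact diffeos}, which actually produces a contact structure $\xi$ on $Y$ and a contactomorphism $\psi$ isotopic to $h$, together with a \emph{nowhere vanishing Legendrian vector field} $X$ with $\psi_*X$ homotopic to $X$ in $\xi$ (rel $\partial Y$); this is the extra data we need. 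So I would first re-run that construction, keeping track of $X$, and recall from the proof of Theorem~\ref{t: suspension} that $\EE$ is of the form $\xi_s\oplus\R(\partial_\varphi+L)$ on the mapping-torus part $Y_s$ and $\EE_b=\ker(dx+r^2d\varphi-r^2dy)$ near the binding, with isotropic foliation directed by $W=\partial_\varphi+L$ (equal to $\partial_\varphi+\partial_y$ near $K$).

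Next I would upgrade $\EE$ to an Engel structure exactly as in the second unnamed Lemma of Section~2 and in Example~\ref{ex:perturb prolong}. Since $\psi_*X\simeq X$ in $\xi$ rel $\partial Y$, after an isotopy of $\psi$ (absorbed into $h$) I may assume $\psi_*X=X$, so $X$ descends to a nowhere vanishing Legendrian vector field on the fibers of $Y_s\to S^1$; complete it to a trivialization $C_1=X$, $C_2$ of $\xi_s$ over $Y_s$ (using that $\xi$ has a Legendrian framing, which follows from $\psi$ preserving the framing $e$ — here is where the hypothesis on the \emph{framing}, rather than just a plane field, is used). Then set
$$
W=\partial_\varphi+L,\qquad X_k=\cos(2\pi k\varphi)\,C_1+\sin(2\pi k\varphi)\,C_2,
$$
and let $\DD_k=\langle W,X_k\rangle$. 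For $k$ a sufficiently large positive integer, the bracket computation in the Lemma shows $[\DD_k,\DD_k]=\EE$, so $\DD_k$ is an Engel structure with isotropic foliation $\WW=\R W$, hence adapted to the open book.

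The one genuinely new point, and the main obstacle, is the behaviour near the binding $K=T^2\times\{0\}\subset T^2\times D^2_{r_0}$: there $\EE=\EE_b$ and $W=\partial_\varphi+\partial_y$, and I need a trivialization of $\xi_\varphi=\EE_b\cap TY_\varphi$ over the solid-torus page which is compatible, up to homotopy rel the gluing region, with the trivialization $C_1=X$, $C_2$ coming from $Y_s$, so that the rotating vector field $X_k$ extends across $K$ while keeping $W$ as the kernel line. Concretely one chooses, near $K$, the framing $X=-\partial_r$ (matching the section $X$ of $\EE$ singled out in the twisting-number discussion) and a second Legendrian field, computes $\tw(C_1,\gamma)$ for $\gamma$ a meridian and a longitude of each boundary torus, and then picks $r_0$ and the integer $k$ large enough to reconcile the two framings; the twisting-number bookkeeping introduced just before Section~\ref{s:existence proof} is exactly designed to make this matching work. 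Once the trivialization is globalized, increasing $k$ only helps the even-contact (Engel) inequality, so a single $k\gg0$ works simultaneously on $Y_s$ and on $N(K)$, and $\DD_k$ is the desired Engel structure on the relative mapping torus of $h$.
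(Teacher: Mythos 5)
Your outline on the mapping-torus part $Y_s$ matches the paper: run the relative version of Proposition~\ref{p:fromframings to contact diffeos} to get $(\xi,\psi,X)$, build $\EE$ as in Theorem~\ref{t: suspension}, and span $\DD_k$ by $W$ and $X_k=\cos(2\pi k\varphi)C_1+\sin(2\pi k\varphi)C_2$ for $k$ large. But the step you yourself flag as ``the one genuinely new point'' --- extending across the binding --- is exactly where your argument has a real gap, and it is not closed by ``picking $r_0$ and $k$ large enough to reconcile the two framings.'' The obstruction to extending the Legendrian line field $X_k$ over $T^2\times D^2$ as a nowhere vanishing section of $\EE$ transverse to $\WW$ is a relative Euler class: along $\{pt\}\times\partial D^2$ the section $X_k$ twists $k$ times, while the model even contact structure $\EE_b=\ker(dx+r^2d\varphi-r^2dy)$ coming from Theorem~\ref{t: suspension} restricts on a meridional disc to a contact structure with $e(\xi')[D^2]=1$. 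For $k>1$ no choice of $r_0$ and no further increase of $k$ removes this mismatch --- increasing $k$ makes it worse. The paper's resolution is to \emph{change the even contact structure near the binding}: it builds an $x$-invariant contact structure $\xi'$ on $S^1_x\times D^2$ whose convex meridional disc carries $(k+1)/2$ positive elliptic and $(k-1)/2$ negative hyperbolic singularities, so that $e(\xi')[D^2]=k=\tw(X'_k,\partial D^2)$ and Poincar\'e--Hopf lets $X'_k$ extend to a nowhere vanishing section $C_1'$. This is also why $k$ must be taken \emph{odd}, a constraint absent from your write-up.

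A second omission: even granted the extended framing $(C_1',C_2')$, you still have to produce an Engel structure (not merely an even contact structure with a transverse section) on $T^2\times D^2$ whose isotropic foliation is $\partial_y+\partial_\varphi$ and which rotates along the leaves of $\WW$, including over $K$ itself where $\WW$ is tangent to the binding. The paper does this with a second integer $l$, setting $\tilde X_l=\cos(ly)C_1'+\sin(ly)C_2'$ and pushing forward by the shear $(x,y,r,\varphi)\mapsto(x,y,r,\varphi+y)$; the gluing with $\DD_k$ on $Y_s$ then imposes the numerical condition $l-k=\lambda$, where $\lambda=\tw(C_1,\gamma_y)$ is the twisting of your chosen framing along the $y$-curve of $\partial Y$. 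Your proposal has no analogue of $l$ and no mechanism forcing $\DD$ to rotate along $\WW$ inside the binding region, so as written it does not yield an Engel structure there. The twisting-number bookkeeping is indeed ``designed to make the matching work,'' but only in combination with these two constructions, which are the substance of the paper's proof and are missing from yours.
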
 

We will prove this theorem in Section~\ref{s:existence proof}. Using Lemma~\ref{l:geiges trick} we obtain the following corollary. 

\begin{cor}\label{cor: contact}
If $M$ is an oriented closed parallelizable $4$-manifold, every open book decomposition with toric binding $(K, \theta )$ of $M$ carries an Engel structure.
\end{cor}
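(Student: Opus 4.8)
The plan is to deduce the corollary from Theorem~\ref{t:existence} by verifying its hypotheses with the help of Lemma~\ref{l:geiges trick}. First I would pass from the intrinsic open book $(K,\theta)$ to an extrinsic description: write $(K,\theta)$ as the relative mapping torus of a pair $(Y,h)$, where $Y$ is a compact oriented $3$-manifold, $h$ is a diffeomorphism of $Y$ equal to the identity near $\partial Y$, and the resulting $\Sigma_0(Y,h)$, together with its tautological fibration, is $(M,\theta)$. Since the binding $K$ is a union of $2$-tori, $\partial Y$ is a non-empty union of tori, so the only thing left to check before invoking Theorem~\ref{t:existence} is the existence of a framing $e$ of $Y$ such that $h_*e$ is homotopic to $e$ relative to $\partial Y$.

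To produce such a framing I would first exhibit a suspension vector field of the open book: a nowhere vanishing vector field $X$ on $M$ which is positively transverse to the interiors of the pages, tangent to $K$, and linear on each component of $K$. Such an $X$ exists because $\theta$ is a submersion on $M\setminus K$, so there the fields $X$ with $d\theta(X)>0$ form a non-empty convex subbundle of $TM$, while on a neighbourhood $N(K)\simeq T^2\times D^2$ one may take $X=\partial_\phi+V_0$ with $V_0$ a nonzero constant vector field on $T^2$; these local choices patch to a globally nowhere vanishing $X$, the absence of zeros being consistent with $\chi(M)=\chi(K)=0$ (equivalently, $M$ is parallelizable). Now apply Lemma~\ref{l:geiges trick}: fixing an identification $TM\cong M\times\mathbb{H}$, the quadruple $X,\,iX,\,jX,\,kX$ is a framing of $TM$.

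The key step is to convert this into the required framing of $Y$. Because $X$ is transverse to the interior of each page $Y_\varphi$ of the mapping torus, one has $TM|_{Y_\varphi}=\R X\oplus TY_\varphi$, and projecting $iX,jX,kX$ onto the second summand along $X$ gives a framing $e^\varphi=(e^\varphi_1,e^\varphi_2,e^\varphi_3)$ of $Y_\varphi$, depending continuously on $\varphi\in[0,1]$. Put $e:=e^0$. Since $X$ and the parallelization are globally defined on $M$, and since in the mapping torus the page $Y_1$ is the page $Y_0$ re-parametrized by $h$, one obtains $e^1=h_*e^0$ (the gluing convention may instead give $h^{-1}$, which is immaterial as the hypothesis of Theorem~\ref{t:existence} is symmetric under $h\leftrightarrow h^{-1}$). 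Hence $\varphi\mapsto e^\varphi$ is a homotopy from $e$ to $h_*e$; choosing $X$ and the identification $TM\cong M\times\mathbb{H}$ to be $\varphi$-independent on a neighbourhood of $K$ — where $h=\mathrm{id}$ and the mapping torus is a genuine product — makes the homotopy stationary near $\partial Y$, i.e.\ relative to $\partial Y$. This is exactly the observation recorded right after the proof of Lemma~\ref{l:geiges trick}, and I expect it to be the only delicate point: one must check that projecting the global frame to the moving fibre really produces the homotopy witnessing $h_*e\simeq e$, and that it can be made rel $\partial Y$.

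With the framing $e$ in hand, Theorem~\ref{t:existence} applies to $(Y,h)$ and yields an Engel structure $\DD$ on the relative mapping torus of $h$, which is $M$ carrying the open book $(K,\theta)$; by the construction in Section~\ref{s:existence proof} the even contact structure $\EE=[\DD,\DD]$ is adapted to $(K,\theta)$, so $\DD$ is supported by $(K,\theta)$ in the sense of Definition~\ref{d:adapted Engel}. The construction of $X$ near the binding is routine, and, apart from the bookkeeping flagged above, everything else is a direct application of Theorem~\ref{t:existence} and Lemma~\ref{l:geiges trick}.
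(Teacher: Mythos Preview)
Your proposal is correct and follows essentially the same approach as the paper: the corollary is deduced from Theorem~\ref{t:existence} via Lemma~\ref{l:geiges trick} and the remark immediately following it (apply the quaternion trick to the suspension vector field on the mapping torus and project the resulting frame to the fibers to obtain a framing of $Y$ with $h_*e\simeq e$). You have simply expanded that one-line remark---extending the suspension field across the binding and flagging the rel-$\partial Y$ condition as the point needing care---which is precisely the bookkeeping the paper leaves implicit.
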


\begin{prop}
Let $\DD$ be a Engel structure carried by $(Y,h)$ and $\delta : H_1 (Y,\Z )\rightarrow \Z$ be a morphism. If $\delta \circ h_* =\delta$, then there exists en Engel structure $\DD'$ supported by $(Y,h)$, which induces the same contact structure as $\DD$ on $int (Y_0 )$ (with $\WW' =\WW$ and $\EE' =\EE$ outside a neighborhood of $K$) such that $\delta (\DD ,\DD' )=\delta$.
\end{prop}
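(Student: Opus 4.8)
The plan is to keep the even contact structure $\EE$ and its isotropic foliation $\WW$ unchanged away from the binding, and to obtain $\DD'$ from $\DD$ by rotating, inside the page contact structures, the generator of $\DD$ that is transverse to $\WW$; the amount of rotation will be governed by a cohomology class representing $\delta$. Write $M$ as the mapping torus $Y_s=Y\times[0,1]/\sim_h$ (suspension coordinate $\varphi$) with the binding neighbourhood $N(K)$ glued on. By the proof of Theorem~\ref{t:existence} one may assume on $Y_s$ that $\DD=\langle W,X\rangle$, where $W=\partial_\varphi+L$ spans $\WW$ and $X$ is a section of the page contact structure $\xi_\varphi=\EE\cap TY_\varphi$ which, with respect to a fixed trivialisation $C_1,C_2$ of $\xi_\varphi$, makes $k$ full turns as $\varphi$ runs once around, with $k$ large (this twisting is what makes $\DD$ Engel). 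Denote by $\widehat X$ the quarter-turn rotation of $X$ inside the oriented plane $\xi_\varphi$.

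First I would promote $\delta$ to a class on the whole mapping torus. Since the first return map of $\WW$ on the page $Y_0$ is isotopic to $h|_{Y_0}$, the hypothesis $\delta\circ h_*=\delta$ says precisely that $\delta\in H^1(Y_0;\Z)=\mathrm{Hom}(H_1(Y_0;\Z),\Z)$ is monodromy-invariant; by the Wang sequence of the fibration $Y_0\hookrightarrow Y_s\to S^1$ these are exactly the classes that extend over $Y_s$, so $\delta=\iota^*\widetilde\delta$ for some $\widetilde\delta\in H^1(Y_s;\Z)$, which I represent by a smooth map $g\colon Y_s\to\R/\Z$ chosen (within its homotopy class) with small $\partial_\varphi$-derivative. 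Now set
$$X'=\cos(\pi g)\,X+\sin(\pi g)\,\widehat X,\qquad \DD'=\langle W,X'\rangle.$$
Replacing $g$ by $g+1$ turns $X'$ into $-X'$, so $\DD'$ is a well-defined plane field (it may fail to be co-oriented, which the definition of an Engel structure does not demand). Since $X'$ is a rotation of $X$ inside $\xi_\varphi$, one has $\langle W,X'\rangle\subset\WW\oplus\xi_\varphi=\EE$, and the bracket $[W,X']$ is computed exactly as for $\DD$ with the twisting phase $k\varphi$ replaced by $k\varphi+\pi g$; for $k$ large the coefficient $k+\pi\,\partial_\varphi g$ stays positive, so $[W,X']$ still fills out $\EE$ and $[\DD',\DD']=\EE=\EE'$. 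Thus $\DD'$ is Engel, $\WW'=\WW$, the page contact structures are unchanged, so $\DD'$ is adapted to $(K,\theta)$ and induces $\xi_0$ on $\mathrm{int}(Y_0)$. Finally, on the page $Y_0$ the Legendrian line fields are $L=\langle X|_{Y_0}\rangle$ and $L'=\langle X'|_{Y_0}\rangle$, and $L'$ is obtained from $L$ by rotation through the angle $\pi g|_{Y_0}$ inside $\xi_0$; since $g|_{Y_0}$ represents $\delta$, the rotation number of $L'$ relative to $L$ along a loop $\gamma$ is $\delta([\gamma])$, i.e. $\delta(\DD,\DD')=\delta$.

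The delicate step, which I expect to be the real obstacle, is the behaviour along the binding. If $\delta$ is non-trivial on the torus classes carried by $\partial Y_0$, then $g$ cannot be taken constant near $K$, so the rotation genuinely has to be performed over $N(K)$, where the polar coordinates on the $D^2$-factor and the trivialisation $C_1,C_2$ degenerate along $K$. One must verify that $X'$ extends smoothly across the binding and that the Engel condition survives there, while keeping $\WW$ linear on $K$ and the characteristic foliations on the tori parallel to $K$ linear, as required by the definition of an adapted structure. Allowing $\EE$ — and, if one is forced to, $\WW$ — to be modified inside the small neighbourhood $N(K)$ (without changing the page contact structures) provides the room to carry this out, and this is exactly why the statement only claims $\WW'=\WW$ and $\EE'=\EE$ outside a neighbourhood of $K$. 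All the work is in this local model near the binding; the construction on $Y_s$ itself is a routine variant of the one behind Theorem~\ref{t:existence}.
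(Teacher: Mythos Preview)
The paper states this proposition without proof, so there is no argument to compare against; your idea---rotating the Legendrian line of $\DD$ inside the page contact structure by a circle-valued function $g$ representing $\delta$---is the natural one and is presumably what the authors have in mind, the construction in the proof of Theorem~\ref{t:existence} being exactly of this shape once one regards a change of the trivialisation $C_1,C_2$ as such a rotation.

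There is, however, a real gap in your verification of the Engel condition on $Y_s$. You claim $g$ can be taken with small $\partial_\varphi$-derivative and then appeal to ``$k$ large'', but $k$ is determined by the given $\DD$ and is not at your disposal, while making the relevant derivative $Wg$ (not just $\partial_\varphi g$, since $W=\partial_\varphi+L$) uniformly small within the homotopy class of $g$ is not automatic: a short closed $\WW$-orbit on which $\tilde\delta$ is non-zero gives a lower bound on $\sup|Wg|$. The correct manoeuvre is the opposite one. The Wang sequence only determines the extension $\tilde\delta\in H^1(Y_s;\Z)$ modulo the class dual to the suspension circle, so you may replace $g$ by $g+n\varphi$ for any integer $n$; since $W\varphi=1$ this adds $n$ to $Wg$, and for $n$ large the rotation speed of $\DD'$ along $W$ strictly exceeds that of $\DD$, forcing the Engel condition regardless of the size of $k$. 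This costs nothing on the page: $g|_{Y_0}$ is unchanged, so $\delta(\DD,\DD')=\delta$ still holds. The price is that $g$ now has non-trivial winding along the meridian of $K$ and cannot extend over the binding; one must instead re-run the construction of the even contact and Engel structures on $T^2\times D^2$ with the singularity count in \eqref{e:sing for extension} adjusted to the new twisting numbers on $\partial Y_s$. This is exactly the ``delicate step'' you anticipate, and the proviso that $\EE'=\EE$ and $\WW'=\WW$ only outside a neighbourhood of $K$ is there precisely to permit it.
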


\begin{question} Is every Engel structure homotopic to an Engel structure carried by an open book?
\end{question}

\begin{question}\label{q: tight-engel} Which Engel structures are carried by an open book decomposition with tight pages?
\end{question}

Let $\EE$ be an even contact structure on the total space of a fibration over $S^1$ such that the isotropic foliation is transverse to the fibers of $\theta$. (We are going to consider two cases $\theta: Y_s\to S^1$ and  $\theta: T^2\times (D^2\setminus\{0\})\to S^1$). We denote the vertical tangent vectors in $\EE$ by $\EE^v$. The analogous notation $\DD^v$ will be used later for Engel structures $\DD$ whose isotropic foliation is transverse to a fibration over $S^1$.

In what follows,  $\gamma_x,\gamma_y,\gamma_\varphi$ are simple closed curves in  $\partial Y\times S^1$ (or, equivalently, in $T^2\times \partial D^2\subset T^2\times D^2$) which correspond to the coordinates $x,y,\varphi$. 

\subsubsection{Engel structure on $Y_s$}

Lt $S_\xi$ be vector field directing the characteristic foliation on $T^2 \times \{ r\} \times \{ \varphi\}$ such that $\partial_r,S_\xi$ is an oriented framing of $\xi$ on a neighborhood of $\partial Y$ and $\partial_r$ points into $Y_s$ along $\partial Y_s$.  We choose coordinates $x,y$ on $T^2=\partial Y$ such that 
\begin{align} \label{e:x}
\tw(C_1,\gamma_x) & =0  & \tw(C_1,\gamma_y) &=\lambda
\end{align}
where $\lambda$ is an integer. This determines the homotopy class of $C_1$ as a nowhere vanishing section of $\xi$ and we assume
\begin{align} \label{e:C1C2}
\begin{split}
C_1 & = \cos(\lambda y)\partial_r+\sin(\lambda y)S_\xi\\
C_2 & = -\sin(\lambda y)\partial_r+\cos(\lambda y)S_\xi
\end{split}
\end{align}
on a collar of $T^2\subset\partial Y$.  A simple computation shows that $C_1$ has the right twisting numbers (cf. \eqref{e:x}). Note in particular, that along $\{ y=0\}$, $C_1 =-\partial_r$ and $C_2 =S_\xi$. 
 
On $Y\times[0,2]$ with coordinates $(p, \varphi)$ consider the plane field $\DD_k$ spanned by the isotropic foliation of the even contact structure defined as in Section \ref{s:even open} using the contact structure $\xi$, and 
\begin{equation}\label{e:Xk}
X_k:=\cos(2\pi k\varphi)C_1+\sin(2\pi k\varphi)C_2
\end{equation}
 with $k\in\mathbb{N}$. When $k>0$ is large enough $\DD_k$ is an Engel structure and $[\DD_k,\DD_k]=\EE$ is independent of $k$. The orientation of $\EE$ induced by $\DD_k$ is the one determined by the framing $C_1,C_2$. Moreover, assuming again that $k$ is large enough, there is a smooth function $f : Y\lra(0,2)$ with $f\equiv 1$ on $U$ (where $h=id$) such that 
\begin{equation} \label{e:untwist framing}
h : Y\simeq \{(p,f(p))|p\in Y\}  \lra Y=Y\times\{0\}
\end{equation}
preserves the line field spanned by $X_k$. At this point we use the assumption that the framing $(h_*(C_1),h_*(C_2))$ is homotopic to $(C_1,C_2)$ together with the fact that the holonomy of the characteristic foliation of $\DD_k$ is the identity on $U$. Thus, we obtain an Engel structure $\DD_k$ on $Y_s$.
Notice that near $\partial Y_s$ the Engel structure $\DD_k$ is  $\langle \partial_\varphi +\partial_y \rangle\oplus \langle X_k\rangle$.

By construction $\tw(X_k,\gamma_\varphi)=k$
. In order to make subsequent constructions possible, we require that $k$ is odd and positive.

\subsubsection{Even contact structure on $T^2\times D^2$}

We need to extend the Engel structure $\DD_k$ to $T^2\times D^2$. For that we will first extend the associated even contact structure $\EE$ in such a way that the plane field $\DD_k$ also  extends to a plane field in $\EE$ containing $\WW$. This extra condition requires a modification of the construction in Section~\ref{s:even open}.

We will choose the even contact structure on $T^2\times D^2=S_{x}^1\times S_{y}^1\times D^2$ such that it has a section $C_1'$ extending $X_k$ over $(0,0)\times D^2$ which is never tangent to $\WW$. Note that the homotopy type of $X_k$ as a section of $\EE^v$ along $(0,0)\times \partial D^2$ depends only on $k$ (and not on the choice of $C_1$ in $Y$ since it is $\varphi$-invariant). This is done as follows.

On $S^1_{x}\times D^2$ consider an $x$-invariant contact structure $\xi'$ with the following properties:
\begin{itemize}
\item $\xi'=\ker(dx+r^2d\varphi)$ near the boundary of $S^1_{x}\times D^2$. In particular, $\xi' =\xi$ along $\partial Y_s =\partial N(K)$.
\item The characteristic foliation on $\{x=0\}$ has only non-degenerate singularities. Let $e_\pm$, respectively $h_\pm$, denote the number of elliptic, respectively hyperbolic, singularities. The subscript refers to the sign of the singularities.  We require 
\begin{align} \label{e:sing for extension}
\begin{split}
e_+=(k+1)/2  & \quad e_-=0\\ 
h_+=0 & \quad h_-=(k-1)/2.
\end{split}
\end{align}
Since $k$ is odd, these numbers are all integers. 
\end{itemize}
Such a foliation exists since $e_++e_--h_+-h_-=1=\chi(D^2)$ and standard results from the theory of convex surfaces imply that the singular foliation on $\{x=0\}$ determines a $x$-invariant contact structure $\xi'$ on $S^1_{x}\times D^2$.
We consider the extension $\EE'$ of $\EE$ of given by $\EE' =\xi' \oplus \langle \partial_y +\partial_\varphi \rangle$.

We let $X'_k$ be a vector field along $\partial Y_s$ generating $\DD_k \cap \xi'$. Note that 
$$
e(\xi')[D^2] = e_+-e_--h_++h_-= k = \tw(X_k', \partial D^2). 
$$
By the Poincar{\'e}-Hopf theorem, the Legendrian vector field $X'_k$  of $\xi'|_{\partial D^2}$ extends to a nowhere vanishing section $C_1'$ of $\EE'$ on $T^2 \times (0,0)$, everywhere transverse to $\WW' = \partial_y +\partial_\varphi$.

This extension is unique up to homotopy and there is a Legendrian vector field $C_2'$ such that $(C_1',C_2')$ is a framing of $\xi'$ on $ (0,0)\times D^2 $. 
 
Near $T^2\times \partial D^2$ we require 
$C_i$ and $C_i'$ have the same image in $\EE/\WW$  for $i=1,2$. Then, there is a homotopy between $C_i$ and $C_i'$ in $\DD_k \setminus \WW$ so that $C_i'$ can be viewed as an extension of $C_i$. Near $T^2\times \partial D^2$, we have:

\begin{align}\label{e:Y1Y2}
\begin{split}
C_1' & = \cos(k\varphi)\partial_r+\sin(k\varphi)S_\xi \\
C_2' & = -\sin(k\varphi)\partial_r+\cos(k\varphi)S_\xi.
\end{split}
\end{align}
This is consistent with the choice of $X_k$ near $\partial Y_s$, i.e. the $C_i$ together with $C_i'$ forms a smooth vector field on a neighborhood of $T^2\times\partial D^2$ in  $M=Y_s\cup_\partial (T^2\times D^2)$ for $i=1,2$. 
 
\subsubsection{Engel structure on $T^2\times D^2$}

Now we define $\tilde{\DD}_l$ on $T^2\times D^2$ as the span of $\partial_{y}$ and $\tilde{X}_l=\cos(ly)C_1'+\sin(ly)C_2'$. This is an Engel structure for all $l\in\mathbb{N}$. The isotropic foliation is spanned by $\partial_{y}$, so it is not transverse to the fibers of $\theta$. However, the Engel structure $\DD'_l$ obtained by pushing forward $\tilde{\DD}_l$ with the diffeomorphism 
\begin{align*}
f : S^1\times(S^1\times D^2) & \longrightarrow S^1\times(S^1\times D^2) \\
(x,y,r,\varphi) & \lmt (x,y,r,\varphi+y)
\end{align*} 
has the desired property: Its isotropic foliation is spanned by $\partial_{y}+\partial_{\varphi}$, so it is transverse to the fibers of $\theta$.  Let 
\begin{equation} \label{e:Xl}
X_l'=f_*(\tilde{X}_l)= \cos(ly)f_*(C_1')+\sin(ly)f_*(C_2').
\end{equation}
By construction, the gluing map $g$ maps the isotropic foliation (which is tangent to $\partial Y_s$) to the isotropic foliation of $\DD_l'$ (which is also tangent to $T^2\times \partial D^2$).  As in Section \ref{s:even open} we can arrange that the gluing map preserves the singular foliation on the fibers of $\theta$.

\subsubsection{Gluing the pieces together}

The even contact structures induced by $\DD_k$, respectively $\DD_l'$, coincides with the even contact structures near $\partial Y\times S^1$, respectively $T^2\times\partial D^2$, which arose  in Section \ref{s:even open}. Therefore, the only remaining problem is to ensure that the gluing maps the line field $\DD_l'^v$ to  $\DD_k^v$.  From \eqref{e:Xk} and \eqref{e:C1C2} we obtain
\begin{align*}
X_k 
  & = \cos(k\varphi+\lambda y)\partial_r+\sin(k\varphi+\lambda y)S_\xi \\
\intertext{and, according to \eqref{e:Y1Y2} and \eqref{e:Xl}, $X_l'$ is given by }
X_l' 
       & = \cos((l-k)y+k\varphi)\partial_r+\sin((l-k)y+k\varphi)S_\xi
\end{align*}
near $T^2\times \partial D^2$. The conditions on the parameters $l,k$ are
\begin{itemize}
\item $k$ is a positive odd integer,
\item $l$ is a positive integer.
\end{itemize}
Thus, we obtain a smooth Engel structure on $M=(T^2\times D^2)\cup_\partial Y_s$ if $l-k=\lambda$.  

\subsubsection{Uniqueness}
Using recent results about flexibility of Engel structures \cite{cpp} we can prove the following

\begin{thm} \label{t:loose unique}
For a fixed choice of framing $e$, the Engel structures constructed in Theorem \ref{t:existence} are unique up to homotopy through Engel structures for $k$ large enough.
\end{thm}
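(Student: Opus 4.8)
The plan is to compare the Engel structures $\DD_{k}$ and $\DD_{k'}$ constructed in the proof of Theorem~\ref{t:existence} (for two large odd values $k,k'$, with $l$, $l'$ adjusted so that $l-k=l'-k'=\lambda$) by first checking that they are \emph{formally} homotopic and then invoking the flexibility of loose Engel structures from \cite{cpp}. The key point in the first step is the observation recorded just after Definition~\ref{d:adapted Engel} and in the Remark following Definition~\ref{defn}: once $\EE=[\DD,\DD]$ and the framing $e$ are fixed, the homotopy class of $\DD$ as a plane field carrying the flag $\WW\subset\DD\subset\EE$ is determined by a section of the $S^1$-bundle $S(\EE/\WW)\to M$, i.e. by the twisting data $\tw(X_{k},\gamma)$ over the relevant curves. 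So I would first show that the underlying even contact structures $\EE_{k}$ and $\EE_{k'}$ are homotopic (through even contact structures): both are supported by the same triple $(Y,\xi,h)$ after the $\dim 4$ analogue of Theorem~\ref{t: suspension}, and on the handle $T^{2}\times D^{2}$ the only data that changed are the numbers $e_{\pm},h_{\pm}$ in \eqref{e:sing for extension}, which do not affect the isotopy class of the $x$-invariant contact structure $\xi'$ up to the homotopy allowed by the $h$-principle \cite{mcduff}; hence the formal even contact structures $(\EE_{k},\WW_{k})$ and $(\EE_{k'},\WW_{k'})$ agree up to homotopy.

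Next I would arrange that the comparison can be made \emph{relative to a common framing}: this is exactly where the hypothesis ``for a fixed choice of framing $e$'' enters. The framing $e$ fixes, up to homotopy, the section of $S(\EE/\WW)$ that $\DD$ must represent, because the framing of $TM$ induced by an oriented Engel structure (via $\WW\subset\DD\subset\EE\subset TM$, as explained after Definition~\ref{defn}) is a homotopy invariant. Concretely, on the page side the relevant invariant is the rotation number $k$ of $C'=\DD\cap\xi_{t}$ against $C=\xi_{t}\cap T(\partial Y_{t}\times\{s_{0}\})$ defined in Section~5.1, and on the binding side it is $\tw(X_{k},\gamma_{\varphi})=k$; the difference of these data for $k$ and $k'$ is measured by a class $\delta(\DD_{k},\DD_{k'})\in\mathrm{Hom}(H_{1},\Z)$ together with the $\gamma_{\varphi}$-twisting, and since both Engel structures are required to induce (a perturbation of) the \emph{same} framing $e$, that class must vanish. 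Thus $\DD_{k}$ and $\DD_{k'}$ are homotopic as \emph{formal} Engel structures, i.e. the pairs $(\EE_{k},\DD_{k})$ and $(\EE_{k'},\DD_{k'})$ lie in the same component of the space of formal solutions.

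Finally I would upgrade the formal homotopy to a genuine Engel homotopy using \cite{cpp}: the Engel structures $\DD_{k}$ constructed here are loose in the sense made precise in Theorem~\ref{t:loose unique}'s surrounding discussion (this is the ``class of loose Engel structures'' referred to in the abstract and in Section~\ref{s:existence proof}), and the main theorem of \cite{cpp} asserts an $h$-principle for loose Engel structures, so formally homotopic loose Engel structures are homotopic through Engel structures. One should check that the homotopy produced in the previous two paragraphs can be taken within the class of loose Engel structures — this is automatic since looseness is an open condition preserved along the standard model deformations, and the looseness disk can be kept fixed (say in a region where $h=\mathrm{id}$ and the construction is the product model of Example~\ref{ex:perturb prolong}). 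Patching these, $\DD_{k}$ and $\DD_{k'}$ are Engel-homotopic for all sufficiently large odd $k,k'$ with $l-k=\lambda$.

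The main obstacle I anticipate is the second step: making the invariant $\delta(\DD_{k},\DD_{k'})$ (and the $\gamma_{\varphi}$-twisting) rigorously encode the full formal-homotopy obstruction, and verifying that fixing the framing $e$ kills \emph{all} of it rather than just its restriction to the pages — one has to be careful that the contributions on the binding $T^{2}\times D^{2}$ coming from the Euler class computation $e(\xi')[D^{2}]=k$ and on $Y_{s}$ from the rotation number $k$ are matched by the \emph{same} global framing and do not leave a residual $\Z$ (or $H^{1}(M;\Z)$) worth of formal classes. This is precisely the subtlety flagged in the introduction (``the homotopy class of $\DD$ as a distribution on the sequence $\WW\subset\DD\subset\EE$ \ldots\ corresponds to homotopy classes of sections of the $S^1$-bundle $S(\EE/\WW)$''), so the bookkeeping of these sections over the decomposition $M=Y_{s}\cup_{\partial}(T^{2}\times D^{2})$ is where the real work lies.
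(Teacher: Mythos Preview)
Your two-step strategy --- check formal homotopy, then invoke \cite{cpp} for loose Engel structures --- is exactly the paper's approach. However, both steps in your execution have genuine gaps compared with what the paper actually does.

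\textbf{Formal homotopy.} You argue that ``since both Engel structures are required to induce (a perturbation of) the \emph{same} framing $e$, that class must vanish.'' But this is precisely the content to be proved, not an assumption: the framing $e$ is the \emph{input} trivialization of $TY$, whereas the output framings of $TM$ induced by $\DD_k$ and $\DD_{k+2}$ are a priori different (they differ by the extra twisting of $X_k$ along $\gamma_\varphi$ and by the two additional singular points in \eqref{e:sing for extension}). The paper does not use the $\delta$-invariant or the $h$-principle for even contact structures here; instead it builds an explicit homotopy of framings from $k$ to $k+2$ by keeping everything fixed on $D_{1/2}\times T^2$, inserting one elliptic and one hyperbolic singularity on the annulus $D_1\setminus \ring{D}_{1/2}$ (so that the singularity count \eqref{e:sing for extension} matches for $k+2$), and then extending by the stated $x$-, $y$-invariance properties. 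Uniqueness of the extension uses $\pi_2(\mathrm{SO}(4))=0$. Your ``main obstacle'' paragraph correctly identifies the difficulty, but the resolution is this concrete geometric homotopy, not abstract bookkeeping of sections of $S(\EE/\WW)$.

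\textbf{Looseness.} Saying ``looseness is an open condition'' is not sufficient. The result from \cite{cpp} the paper uses is quantitative: there is an integer $N_0$ such that two formally homotopic $N$-loose Engel structures with $N>N_0$ are Engel-homotopic, where $N$-loose means every point sits in an $N$-Darboux chart (one in which the line $\DD\cap\xi$ makes $N$ full turns along the $\WW$-direction). The paper then checks that the construction on $Y_s$ is $(k-L)$-loose for a constant $L$ depending only on $h$ and the framing (coming from the graph identification \eqref{e:untwist framing}), and that $\DD'_l$ on $T^2\times D^2$ is $l$-loose; since $l=k+\lambda$ with $\lambda,L$ independent of $k$, taking $k$ large pushes both beyond $N_0$. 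You need to supply this quantitative estimate rather than appeal to openness.
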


According to \cite{cpp}, we only need to check that 
\begin{itemize}
\item any pair of them are homotopic through formal Engel structures, i.e. the induced framings of $TM$ are homotopic, and 
\item the Engel structure is loose in the sense of \cite{cpp} when $k$ is big enough. 
\end{itemize} 
Recall that $k$ is odd, therefore the Engel structures on the mapping torus associated to $k$ and $k+2$ are formally homotopic, i.e. the associated framings are homotopic. Near the boundary, we can be more specific using the coordinates $x,y$ and $\varphi$ on the boundary of the mapping torus: The homotopy of framings can be chosen to be 
\begin{itemize}
\item constant on the neighborhod $D_{1/2}\times  T^2$ of the binding and we may assume that the characteristic foliation on $D_{1/2}$ contains all singular points introduced during the construction \eqref{e:sing for extension} for a fixed, odd value of $k$ at the beginning of the homotopy. At the end of the homotopy, $D_1\setminus\ring{D}_{1/2}$ contains an additional pair of singularities so that at the end of the homotopy, the characteristic foliation on $D_1$ has the number of singular points prescribed by \eqref{e:sing for extension} for $k+2$ instead of $k$.
\item the hyperplane field on $D_1\setminus D_{1/2}$ spanned by the first three components of the framing is invariant under rotations in the $x,y$-direction throughout the homotopy,
\item throughout the homotopy, the framing is invariant under translations in the $x$-direction, the first and the last component of the framing are  also $y$-invariant. 
\item  The second and the third components are not $y$-invariant in general, but they are twisted and the twisting is determined by $\lambda$ as in \eqref{e:C1C2}. 
\end{itemize}
A homotopy with all these properties can be found by first fixing it on the annulus $D_1\setminus \ring{D}_{1/2}$ and then extending the homotopy using the desired invariance properties. 

Since $\pi_2(\mathrm{SO}(4))=\{0\}$, this homotopy is unique and we may assume that the at the end of the homotopy, the characteristic foliation on $D_1\setminus\ring{D}_{1/2}\times \{(0,0)\}\subset D^2\times T^2$ has precisely one elliptic and one hyperbolic singularity (as in \eqref{e:C1C2} for $k+2$ instead of $k$).

The end result of the homotopy of framings coincides (up to homotopy) with the Engel framing obtained by our construction for $k+2$. In particular, the twisting number along $\gamma_y$ is fixed.

As for the looseness, we use the following definition. A chart in $(M, \DD)$ is an $N$-Darboux chart, $N\in \mathbb{N}$, if there is an Engel embedding of the ball $D^3\times (0, N)$ with coordinates $(x,y,z, \theta)$ and with Engel structure $(\ker (dy-zdx), \ker (\cos (2\pi (\theta-\theta_0)) dz- \sin(2\pi (\theta-\theta_0))dx$ for some $\theta_0\in \R$. 

In other words, the chart is a Darboux chart of an even contact structure in which the vector transverse to $\WW$ rotates $N$ times. We say that $\DD$ is $N$-loose if any point of the manifold can be placed in the interior (not necessarily in the middle) of a  $N$-Darboux chart. The main result of \cite{cpp} implies that there is a integer $N_0$ with the following property: If $\DD_0$ and $\DD_1$ are $N$-loose with $N>N_0$ and formally homotopic, then they are homotopic through Engel structures. 

The Engel structure on $Y_s$ is $k-L$-loose, where $L$ depends only on the gluing map $\varphi$ and the framing (see \eqref{e:untwist framing}). The Engel structure $\DD'_l$ on $T^2\times D^2$ is $l$-loose. Finally, recall that $l=k+\lambda$. Since both $\lambda$ and $L$ are independent of $k$, this implies Theorem~\ref{t:loose unique}.

\section{Stabilization}

Let $(Y,\xi )$ be a contact manifold whose boundary is a non-empty union of tori so that the characteristic foliation of $\xi$ on each component of  $\partial Y$ is linear. 

Let $A$ be a  properly embedded annulus in $(Y,\partial Y)$ such that
\begin{itemize}
\item[(i)] $\partial A$ is not contractible in $\partial Y$,
\item[(ii)] the characteristic foliation $\xi A$ is non-singular and made of intervals going from one boundary component to the other, and
\item[(iii)] the characteristic foliations on the two $2$-tori formed by  $A$ and the two annuli cut by $\partial A$ in $\partial Y$ are linear.
\end{itemize}

Like in the topological case, we glue a handle $S^1 \times [0,1] \times [-1,1]$
to $\partial Y$ and consider the torus $T =A \cup S^1 \times [0,1] \times \{ 0\}$. Recall that the stabilization procedure requires the choice of a curve $\gamma$ in $T$ intersecting the co-core $S^1 \times \{ 1/2\} \times \{ 0\}$ exactly once.

We extend $\xi$ to the handle $S^1 \times [0,1] \times [-1,1]$ in the following way.
First, we  extend the characteristic foliation $\xi A$ of $A$ to a linear foliation of $T$ such that $\gamma$ is an integral curve. This  determines the germ of the contact structure $\xi'$ along $T$. After reducing the thickness of the handle, we obtain a contact structure $\xi'$ on $Y' =Y \cup S^1 \times [0,1] \times [-1,1]$. Because of condition (iii), the characteristic foliations of the two tori formed by gluing $S^1 \times [0,1] \times \{ 0\}$ and the two annuli cut by $\partial A= S^1 \times \{0,1\} \times \{ 0\}$ in $\partial Y$ are linear. This implies that we can choose the boundary of $Y'$ so that the  characteristic foliation of $\xi'$ is linear.

There exists a neighborhood $T\times [-\varepsilon ,\varepsilon ]$ of $T\simeq T\times \{ 0\}$ such that each torus $T\times \{ s\}$, $s\in [-\varepsilon ,\varepsilon ],$ has a linear characteristic foliation for $\xi'$. We now choose a representative of $\tau_\gamma$ which preserves 
each torus together with its foliation. Following Giroux \cite{giroux, etnyre}, we then obtain another representative $\tau_\gamma$ which preserves $\xi'$. Let $h' =\tau_\gamma \circ h$.

Continuing Example~\ref{ex:open book even} we give an example of a situation where the above conditions are satisfied.
\begin{example} \label{ex:A prolongation}
Let $(K,\theta)$ be an open book decomposition of a closed $3$-manifold $Y$ supporting a contact structure $\xi$ which is trivial as a vector bundle. In Example~\ref{ex:open book even} we showed that for $\varepsilon>0$ small enough there is an even contact structure $\EE_\varepsilon$ on $S^1 \times Y$ which is homotopic to $\mathrm{pr}_*^{-1}(\xi)$ through even contact structures. Recall that $\mathrm{pr}$ denotes the projection onto the second factor of $S^1 \times Y$.
We now consider a stabilization of $(K,\theta)$. For this we choose a properly embedded arc $a$ inside a page $S$. By Giroux's realization Lemma \cite{girouxr}, it is always possible to isotope $\xi$ through contact structures carried by $(K,\theta)$ so that $a$ is Legendrian. 
Then the open book decomposition $(\mathrm{pr}^{-1}(K),\theta\circ\mathrm{pr})$ is adapted to $\EE_\varepsilon$ and the annulus $S^1 \times a$ verifies the conditions stated at the beginning of this section. 
\end{example}

\begin{thm}\label{thm: stabilization even} The even contact structures $\EE$ and $\EE'$ determined by the mapping tori of $(Y,\xi ,h)$ and $(Y',\xi' ,h')$ are homotopic.
\end{thm}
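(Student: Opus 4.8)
The plan is to show that the stabilization operation on open books with torus binding, performed contact-geometrically as described, produces an even contact structure that is homotopic to the original one; the natural strategy is to reduce to the uniqueness statement of Theorem~\ref{t: suspension} together with the contact-geometric fact that the stabilized monodromy $h'=\tau_\gamma\circ h$ is, after the handle attachment, contact isotopic to the ``old'' monodromy extended trivially over the new piece. More precisely, I would argue that the mapping torus of $(Y',\xi',h')$ decomposes as the mapping torus of $(Y,\xi,h)$ glued to the piece coming from the handle $S^1\times[0,1]\times[-1,1]$, and that this handle piece carries a standard even contact structure whose isotropic foliation is transverse to the pages and linear on the part of the binding it contributes. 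This is exactly the $S^1$-invariant model already analysed in Example~\ref{ex:open book even} and Example~\ref{ex:A prolongation}: the handle, together with a neighborhood of the arc $a$ inside the page, reconstructs an $S^1$-invariant copy of the open book of $S^3$ from Example~\ref{ex:S3 open book}, and the fibered Dehn twist $\tau_\gamma$ is supported there.

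The key steps, in order: \textbf{(1)} Set up the two mapping tori concretely as in Theorem~\ref{t: suspension}, with collar neighborhoods of the binding tori on which both even contact structures are given by the explicit model $\EE_b=\ker(dx+r^2d\varphi-r^2dy)$, and record that the characteristic foliations on the relevant tori are linear by conditions (i)--(iii) and the linearity hypotheses on $\xi'$ near $T\times\{s\}$. \textbf{(2)} Observe that by construction $\tau_\gamma$ was chosen (following Giroux) to preserve $\xi'$ and to be supported in $T\times[-\varepsilon,\varepsilon]$, where it preserves each torus together with its linear foliation; hence $h'=\tau_\gamma\circ h$ is a contactomorphism of $(Y',\xi')$ that agrees with $h$ (extended by the identity over the handle) outside a region that looks like the $S^1$-invariant $S^3$ open book. \textbf{(3)} Apply Theorem~\ref{t: suspension} to both triples $(Y,\xi,h)$ and $(Y',\xi',h')$: each carries an even contact structure unique up to homotopy among those it supports, with isotropic foliation of the form $\partial_\varphi+L$. \textbf{(4)} Produce the homotopy between $\EE$ and $\EE'$ by exhibiting a path of contactomorphisms of $(Y',\xi')$ from $h'$ to the trivial extension of $h$, interpolating $L$ along the way as in the uniqueness argument of Theorem~\ref{t: suspension} (the path $\xi'\oplus\R(\partial_\varphi+(1-s)L+sL')$), and then identifying the mapping torus of the trivial extension with that of $(Y,\xi,h)$ by a diffeomorphism that respects the open book structure, invoking the model computation from Examples~\ref{ex:S3 open book} and~\ref{ex:A prolongation} to see that adding the trivially-fibered handle does not change the even contact structure up to homotopy.

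The main obstacle I expect is step~\textbf{(4)}: showing that the fibered Dehn twist $\tau_\gamma$ is contact isotopic to the identity \emph{within the class of contactomorphisms fixed near the boundary}, or equivalently that the two monodromies yield homotopic even contact structures rather than merely diffeomorphic mapping tori. The subtlety is precisely the one flagged in Remark~\ref{rem:uni}: supporting the same open book does not by itself guarantee homotopy of the even contact structures unless the return maps are contact isotopic. Here the point is that $\tau_\gamma$ is supported in the neighborhood $T\times[-\varepsilon,\varepsilon]$ of a pre-Lagrangian torus whose complement in the handle piece is the standard $S^1\times B^3$ model, and in that model the fibered Dehn twist along a linear curve in a linearly-foliated torus is generated by a contact Hamiltonian flow, hence contact isotopic to the identity rel boundary — this is the Giroux calculation referenced in the paragraph preceding Theorem~\ref{thm: stabilization even}. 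Making this precise, and checking that the isotopy can be taken to fix the characteristic foliations on the boundary tori so that it matches across the gluing, is where the real work lies; once that is done, Theorem~\ref{t: suspension}'s uniqueness and the h-principle of \cite{mcduff} close the argument.
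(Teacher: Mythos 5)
Your steps (1)--(3) set things up essentially as the paper does, but step (4) contains a genuine gap, and it sits exactly at the point you flag as the main obstacle. The fibered Dehn twist $\tau_\gamma$ is \emph{not} contact isotopic --- nor even smoothly isotopic --- to the identity relative to the boundary of its supporting neighborhood $T\times[-\varepsilon,\varepsilon]$. The obvious path $(p,s)\mapsto (p+t\rho(s)\gamma,s)$ joining it to the identity moves the boundary torus $T\times\{\varepsilon\}$ by the translations $p\mapsto p+t\gamma$, and $\tau_\gamma$ acts non-trivially on relative homotopy classes of arcs crossing $T$, which obstructs any isotopy rel boundary. More decisively: if $h'=\tau_\gamma\circ h$ were isotopic rel boundary to the extension of $h$ by the identity over the handle, the relative mapping torus of $(Y',h')$ would be diffeomorphic to that of $(Y',h\cup\mathrm{id})$, which is \emph{not} $M$ --- already in the three-dimensional model of Example~\ref{ex:S3 open book}, the annulus with a positive Dehn twist gives $S^3$ while the annulus with trivial monodromy gives $S^1\times S^2$. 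The whole point of stabilization is that adding the handle and composing with $\tau_\gamma$ preserve the manifold only \emph{together}; neither does so alone. So the final identification in your step (4), between the mapping torus of the trivial extension and that of $(Y,\xi,h)$, cannot be carried out, and the appeal to the uniqueness clause of Theorem~\ref{t: suspension} (which only compares even contact structures carried by one and the same triple $(Y,\xi,h)$) does not apply across the two different triples.

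The paper's proof avoids this entirely. Instead of trying to undo $\tau_\gamma$ by a contact isotopy, it realizes the passage from $\EE$ to $\EE'$ as an $S^1$-invariant cut-and-paste: one removes $(S^1\times N(a),\EE)$ and glues in $(S^1\times(S^3\setminus N(a)),\EE_\varepsilon)$, i.e.\ performs the $S^1$-invariant connected sum with the standard open book of $S^3$ from Examples~\ref{ex:S3 open book} and~\ref{ex:A prolongation}. Both pieces are $S^1$-invariant models over three-balls carrying tight contact structures, which are unique up to isotopy rel boundary by Eliashberg \cite{El2}; hence the formal homotopy class of the pair $(\EE,\WW)$ is unchanged, and McDuff's h-principle \cite{mcduff} upgrades formal homotopy to a homotopy through even contact structures. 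Your local $S^1$-invariant model and your closing appeal to \cite{mcduff} are the right ingredients; what is missing is the realization that the comparison must be made at the level of formal even contact structures after the surgery on the local piece, not at the level of monodromies of a common page.
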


\begin{proof}
Recall first that if an open book decomposition supporting a contact structure  $\xi$ on a closed $3$-manifold $Z$  is stabilized along an arc $a$ properly embedded in a page, then the stabilized open book decomposition also carries a contact structure $\xi'$ and that $\xi$ and $\xi'$ are isotopic. More precisely, after Giroux-Goodman \cite{giroux-goodman} (also see \cite{etnyre}), one chooses  $a$ to be Legendrian and then
$$(Z,\xi')=(Z,\xi)\#(S^3,\xi_{st}) $$ 
where the sphere separating the two summands bounds a small neighborhood of $a$ in $Z$. The connected sum with $S^3$ does of course not change the manifold and the contact structure $\xi'$ is isotopic to $\xi$ since one Darboux ball (the neighborhood $N(a)$ of $a$) is removed and another Darboux ball is glued in. But then $\xi'$ is homotopic to $\xi$ relative to the complement of $N(a)$ since all tight contact structures on a ball which induce the same characteristic foliation on the boundary are isotopic by Eliashberg \cite{El2} (although in this case it can be checked directly).

Now let $A\simeq S^1\times[0,1]$ be an annulus satisfying the conditions (i), (ii) and (iii) stated at the beginning of this section. Then one can find a neighborhood $N_0(A)\simeq A\times (-\delta,\delta)$ of $A$ in the page $Y$ so that the plane field $\xi$ is defined by $dt-f(y)dx$, where $(t,x)$ are product coordinates on $A=S^1\times[0,1]$ and $y\in(-\delta,\delta)$. Remember that $\xi$ is contact in the interior of $Y$, so $f'(y)>0$ when $x\in (0,1)$ and $f'(y)=0$ on the binding $\partial Y\cap N_0(A)=\{x=0,1\}$.

In order to describe the even contact structure $\EE'$ one can now refer to Example~\ref{ex:A prolongation} when we view $A=S^1\times a$ where $a$ is a leaf of the characteristic foliation of a fixed annular page of the open book of $S^3$ described in Example~\ref{ex:S3 open book}, joining the two components of the Hopf link binding. In particular, there is a neighborhood $N(A)$ of $N_0(A)$ in $M$ where $\EE$ is, possibly after homotopy, conjugate to $(S^1\times N(a),\EE_\varepsilon)\subset (S^1 \times S^3 ,\EE_\varepsilon)$.

Notice that the contact structure $\xi_{st}$ on $S^3$ supported by this open book is not trivial as a vector bundle, but it is over trivial $N(a)$ where we can apply the construction of Example~\ref{ex:open book even}. The even contact structure $\EE'$ is obtained from $\EE$ by first removing $(S^1\times N(a),\EE)$ and then gluing in $(S^1\times(S^3\setminus N(a)),\EE_\varepsilon)$. Here again, $\xi_{st}$ is trivial over $S^3\setminus N(a)$, so the construction makes sense, and there is an orientation reversing identification sending the boundary of $(S^1\times N(a),\EE_\varepsilon)$ to the boundary of $(S^1\times(S^3\setminus N(a)),\EE_\varepsilon)$.
It now follows in particular that $(\EE',\WW')$ and $(\EE,\WW)$ are homotopic as formal even contact structures, and hence as even contact structures \cite{mcduff}. 
\end{proof}

In view of questions \ref{q: tight-even} and \ref{q: tight-engel}, a nice feature of the stabilization operation is the following:

\begin{thm}\label{tight}
If $(Y,\xi )$ is universally tight then so is $(Y',\xi')$.
\end{thm}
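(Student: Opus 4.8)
The plan is to track the universally tight property through the three operations that turn $(Y,\xi)$ into $(Y',\xi')$: attaching the handle $S^1\times[0,1]\times[-1,1]$ and extending $\xi$ to $\xi'$ over it, slightly shrinking $Y'$ so that the boundary foliation is linear, and composing the monodromy with the fibered Dehn twist $\tau_\gamma$. The last operation does not change $\xi'$ at all (by Giroux's argument $\tau_\gamma$ is chosen to preserve $\xi'$), so it is irrelevant to tightness of $(Y',\xi')$; only the first two matter. The shrinking is an isotopy of the ambient contact manifold, so it also preserves the universal tightness. Hence everything reduces to understanding the handle attachment.

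First I would recall, as in the proof of Theorem~\ref{thm: stabilization even}, that the extension of $\xi$ over the handle was arranged so that $Y'$ is obtained from $Y$ by a connected sum $(Y',\xi')=(Y,\xi)\#(S^3,\xi_{st})$: the extension over the handle, after the boundary adjustment, is modelled on removing a Darboux ball $N(a)$ from the standard contact $S^3$ and gluing in the complementary Darboux ball, exactly the picture used there via Example~\ref{ex:A prolongation} and Example~\ref{ex:S3 open book}. Equivalently, $Y'$ is a contact connected sum of $Y$ with the tight $S^3$. So the statement becomes: the contact connected sum of a universally tight contact manifold with $(S^3,\xi_{st})$ is universally tight.

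Next I would invoke the behaviour of tightness under connected sums. Contact connected sum corresponds to boundary-connected sum of the Weinstein/symplectic fillings, and more relevantly one has Colin's gluing theorem: a connected sum of contact manifolds is tight if and only if each summand is tight (\cite{colin} or the standard references on convex gluing). To get \emph{universal} tightness, I would pass to covers: any cover $\widehat{Y'}\to Y'$ is obtained from the corresponding cover $\widehat Y\to Y$ by a (possibly infinite, locally finite) connected sum with copies of $(S^3,\xi_{st})$ along the preimages of the separating sphere; since $\widehat Y$ with $\widehat\xi$ is tight by hypothesis and the $(S^3,\xi_{st})$ summands are tight, Colin's gluing result applies summand by summand (tightness is a local-to-global property detected on compact pieces), so $\widehat\xi'$ is tight. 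As this holds for every cover, $(Y',\xi')$ is universally tight.

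The main obstacle will be making the cover argument clean when the covering is infinite: one must check that connect-summing in infinitely many standard balls, spread out locally finitely, cannot create an overtwisted disc. This is handled by the fact that an overtwisted disc is compact, hence lives in a compact subsurface meeting only finitely many of the summands, reducing to the finite case where Colin's theorem directly gives tightness; I would state this reduction explicitly and then cite \cite{colin}. A minor point to address is that the identification $(Y',\xi')=(Y,\xi)\#(S^3,\xi_{st})$ is only up to the homotopy/isotopy adjustments made in Theorem~\ref{thm: stabilization even}, but those are contact isotopies of $Y'$ and hence preserve universal tightness, so they do not affect the conclusion.
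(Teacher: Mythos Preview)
Your argument rests on the identification $(Y',\xi')\cong(Y,\xi)\#(S^3,\xi_{st})$, and this is where it breaks down. You have confused the ambient manifold with the page. In the three-dimensional Giroux--Goodman picture invoked in Theorem~\ref{thm: stabilization even}, it is the \emph{closed} contact $3$-manifold $Z$ carrying the open book that is recovered as $Z\#(S^3,\xi_{st})$ after stabilization; the page itself changes topology (a $1$-handle is added to the surface). Here the analogue of $Z$ is the $4$-manifold $M$ with its even contact structure, and indeed Theorem~\ref{thm: stabilization even} uses exactly that connected-sum description for $M$. But the theorem you are proving concerns the page $(Y,\xi)$, and $Y'$ is obtained from $Y$ by attaching the round handle $S^1\times[0,1]\times[-1,1]$ along two annuli in $\partial Y$; in general $Y'$ is not even diffeomorphic to $Y$, so it certainly is not $Y\#S^3$. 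Consequently the entire connected-sum and covering argument is applied to the wrong object and does not establish anything about $(Y',\xi')$.

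The paper's proof proceeds differently: it observes that $(Y',\xi')$ is obtained from $(Y,\xi)$ by gluing on a universally tight $T^2\times[0,1]$ along a pre-Lagrangian torus in $\partial Y$ (the boundary tori have linear characteristic foliation by hypothesis), and then applies Colin's gluing theorem \cite{Co}, which says that gluing universally tight pieces along pre-Lagrangian tori yields a universally tight result. If you want to repair your approach you would need to replace the spurious $\#S^3$ description by this torus-gluing description and invoke the corresponding gluing result rather than the connected-sum one.
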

\begin{proof} The contact manifold $(Y',\xi')$ is obtained by gluing a universally tight contact structure on $T^2 \times [0,1]$ to $(Y,\xi)$ along the prelagrangian torus $\partial Y\subset (Y,\xi)$. The result is universally tight by \cite{Co}.
\end{proof} 

Finally, we describe how to obtain an Engel structure $\DD'$ through a stabilization procedure on an open book decomposition supporting an Engel structure $\DD$ on a $4$-manifold $M$.
In many situations, as in the case of even contact structures, if an open book $(Y,\xi,h)$ supports an Engel structure $\DD$ and if $A\subset (Y,\xi)$ is an annulus satisfying conditions $i),ii)$ and $iii)$, then one can construct an Engel structure $\DD'$ carried by the stabilization $(Y',\xi',h')$ of $(Y,\xi,h)$ along $A$, by the use of Theorem \ref{t:existence}. We can check that they are formally homotopic and by \cite{cpp} they are homotopic for $k$ large enough, since we have shown that they are loose. However, we lose control of the geometry. 

We present below a more intrinsic operation.
Unfortunately this construction involves a specific rigid normal form for $\DD$ near the annulus $A$ along which we stabilize that we do not know how restrictive it is.

More explicitely, in the Example \ref{ex:perturb prolong} where $\xi_v$ is a contact structure on some closed $3$-manifold $Y$ supported by an open book $(K,\theta)$, we pick a Legendrian arc $a$ properly embedded in a page $S$ of $(K,\theta)$ and consider the annulus $S^1 \times a =\mathrm{pr}^{-1} (a)$ together with an $S^1$-invariant neighborhood of it $S^1 \times N(a)$. 
We assume that the Engel structure $\DD$ on a neighborhood $N(A)$ of $A$ is conjugated with some Engel structure $\DD_k$ from Example \ref{ex:perturb prolong} on $S^1 \times N(a)$. Recall the construction of $\DD_k$ involves the choice of a trivialization and of a $(K,\theta)$ adapted Reeb vector field of $\xi_v$ on $N(a)$.

In order to obtain an Engel structure carried by the open book of $M$ stabilized along $A$, we replace the Engel structure $\DD \simeq \DD_k$ on $N(A)\simeq S^1\times N(a)$  with an Engel structure $\DD'$ obtained in the following way.
We stabilize the open book $(K,\theta)$ of $Y$ along $a$ using Giroux and Goodman's \cite{giroux-goodman} stabilization construction. This is given, as mentioned in the proof of Theorem~\ref{thm: stabilization even}, by replacing $\xi_v$ and the partial open book  induced by $(K,\theta)$ on $N(a)$ by a structure $\xi_v'$ -- contactomorphic to $\xi_v$ -- and a stabilized partial open book $(K',\theta')$ supporting $\xi_v'$ on $N(a)$. The trivialization and the Reeb vector field of $\xi_v$ along $\partial N(a)$ extend to a trivialization and a $(K',\theta')$ adapted Reeb vector field of $\xi_v'\simeq \xi_v$ on $N(a)$.
We then apply again the construction  of  Example~\ref{ex:perturb prolong} to $\xi_v'$ and $(K',\theta')$ to  get $\DD'$ as an Engel structure $\DD_k'$ on $N(A)\simeq S^1 \times N(a)$. It is supported by the original open book of $M$ stabilized along $A$.

This concludes the description of $\DD'$. As in the case of even contact structures it follows that $\DD'$ is homotopic to $\DD$ through Engel structures: The space of framings of $\xi'_v$ which coincide with a fixed framing on $\partial N(a)$ is contractible. Moreover, notice that $\xi_v$ and $\xi_v'$ are both tight near $\partial N(a)$. According to a theorem of Eliashberg \cite{El2}, any two contact structures on the ball which coincide near $\partial N(a)$ are isotopic relative to the boundary of the ball.  Finally, any two contact vector fields which are transverse to a fixed contact structure are homotopic through contact vector fields which are transverse to that contact structure (this can be seen using convex combinations of two such contact vector fields). 

\begin{rmk}
Given an annulus $A$ in a page $(Y,\xi)$ satisfying conditions $i),ii)$ and $iii)$, it is not clear whether one can find a neighborhood $N(A)$ of $A$ and a homotopy of $\DD$ where $\DD$ becomes conjugated to one of our models. 
\end{rmk}


\begin{thebibliography}{CPPP}

\bibitem[Al]{alexander} Alexander~J., {\em A lemma on systems of knotted curves}, Proc. Nat. Acad. Sci. U.S.A. 9 (1923), 93--95. 

\bibitem [Co1]{Co} V.~Colin, {\em Recollement de vari\'et\'es de contact tendues}, Bulletin de la S. M. F., tome  127, no
  1 (1999), p. 43--69.
  
\bibitem[Co2]{colin} V.~Colin, {\em Livres ouverts en g{\'e}om{\'e}trie de contact (d'apr{\`e}s Emmanuel Giroux)}, S{\'e}minaire Bourbaki. Vol. 2006/2007, Ast{\'e}risque No. 317 (2008), Exp. No. 969, vii, 91--117. 

\bibitem[CPPP]{cppp} R.~Casals, J.~P{\'e}rez, {\'A}.~del~Pino, F.~Presas, {\em Existence h-principle for Engel structures}, Invent. Math. 210 (2017), no. 2, 417--451.

\bibitem[CPP]{cpp} R.~Casals, {\'A}.~del~Pino, F.~Presas, {\em Loose Engel structures}, arXiv:1712.09283.


\bibitem[El1]{overtwisted} Y.~Eliashberg, {\em Classification of overtwisted contact structures on 3-manifolds}, Invent. Math. 98 (1989), no. 3, 623--637.

  \bibitem[El2]{El2} Y.~Eliashberg, {\em Contact $3$-manifolds twenty years since J. Martinet's work, Annales de l'Institut Fourier}, tome  42, no. 1-2 (1992), p. 165--192.

\bibitem[Et]{etnyre} J.~Etnyre, {\em Lectures on open book decompositions and contact structures},  Floer homology, gauge theory, and low-dimensional topology, 103--141, Clay Math. Proc., 5, Amer. Math. Soc., Providence, RI, 2006. 

\bibitem[Ge1]{geiges rev} H.~Geiges, {\sl Review of \cite{montgomery} on MathSciNet}, MR1736216. 

\bibitem[Ge2]{geiges-book} H.~Geiges, {\em An introduction to contact topology}, Cambridge Studies in Advanced Mathematics 109, Cambridge University Press, Cambridge, 2008. 

\bibitem[Gi1]{girouxr} E.~Giroux, {\em Convexit\'e en topologie de contact}, Commentarii mathematici Helvetici (1991). Volume: 66, Issue: 4, 637--677.
  
\bibitem[Gi2]{giroux} E.~Giroux, {\em 
G{\'e}om{\'e}trie de contact: de la dimension trois vers les dimensions sup{\'e}rieures}. Proceedings of the International Congress of Mathematicians, Vol. II (Beijing, 2002), 405--414, Higher Ed. Press, Beijing, 2002. 

\bibitem[GiG]{giroux-goodman} E.~Giroux, N.~Goodman, {\em 
On the stable equivalence of open books in three-manifolds}, 
Geom. Topol. 10 (2006), 97--114. 


\bibitem[LP]{laudenbach-poenaru} F.~Laudenbach, F.~Po{\'e}naru, {\em A note on $4$-dimensional handlebodies}, Bull. Soc. Math. France 100 (1972), 337--344.

\bibitem[McD]{mcduff} D.~McDuff, {\em Applications of convex integration to symplectic and contact geometry}, Ann. Inst. Fourier (Grenoble) 37 (1987), no. 1, 107--133. 

\bibitem[McDS]{mcduffsalamon} D.~McDuff, D.~Salamon, {\sl Introduction to symplectic geometry}, Oxford Univ. Press (1998). 


\bibitem[Mo]{montgomery} R.~Montgomery, {\em Engel deformations and contact structures}, Northern California Symplectic Geometry Seminar, 103--117,
Amer. Math. Soc. Transl. Ser. 2, 196, Amer. Math. Soc., Providence, RI, 1999.

\bibitem[Pr]{presas} F. Presas, {\em Geometric decompositions of almost contact manifolds.} Contact and symplectic topology, 137--172, Bolyai Soc. Math. Stud., 26, J逍｣os Bolyai Math. Soc., Budapest, 2014.

\bibitem[Qu]{quinn} F.~Quinn, {\em Open book decompositions, and the bordism of automorphisms}, Topology 18 (1979), no. 1, 55--73.  

\bibitem[V]{thomas} T. Vogel, {\em Existence of Engel structures},  Ann. of Math. (2) 169 (2009), no. 1, 79--137.

\bibitem[TW]{tw} W.~Thurston, H.~Winkelnkemper, {\em On the existence of contact forms}, Proc. Amer. Math. Soc. 52 (1975), 345--347. 

\end{thebibliography}
\end{document}